\newtheorem{theorem}{Theorem}[section]
\newtheorem{lemma}[theorem]{Lemma}
\newtheorem{definition}[theorem]{Definition}
\numberwithin{equation}{section}
\newcommand{\R}{\mathbb{R}}
\newcommand{\HH}{\mathcal{H}}
\newcommand{\LL}{\mathcal{L}}
\newcommand{\A}{\mathcal{A}}
\newcommand{\eps}{\varepsilon}
\newcommand{\Om}{\Omega}
\newcommand{\prt}{\partial}
\newcommand{\lmb}{\lambda}
\renewenvironment{thebibliography}[1]{
  \begin{oldthebibliography}{#1}
    \setlength{\itemsep}{0.2em}
    \setlength{\parskip}{0.2em}
}
{
  \end{oldthebibliography}
}
\title{$H^\infty$-control problem for a singular parabolic system with convection 
  }
\author[1,2]{Cristian Cazacu} 
\author[2]{Gabriela Marinoschi}
\author[1,2]{Teodor Rugină}
\affil[1]{Faculty of Mathematics and Computer Science, University of Bucharest, 14 Academiei Street, 
		010014 Bucharest, Romania.}
\affil[2]{Gheorghe Mihoc-Caius Iacob Institute of Mathematical
	Statistics and Applied Mathematics of the Romanian Academy\\
	050711 Bucharest, Romania \newline 
    Emails:  cristian.cazacu@fmi.unibuc.ro, gabimarinoschi@yahoo.com, teorugina@yahoo.com.}
    \date{}
\begin{document}
\maketitle
\noindent\textit{Keywords}: $H^\infty$-control, robust control, parabolic systems, Hardy singular potentials.\\
\textit{2020 Mathematics Subject Classification}: 93B36, 93D15, 35K40, 35A23.\\

\begin{abstract}
    We study the $H^\infty-$control problem for an infinite dimensional parabolic system, with a convection term, perturbed by a singular inverse-square potential with control distributed in the interior of a domain, extending part of the results in \cite{marinoschi}.
\end{abstract}

\section{Introduction}

\quad The $H^\infty$-control technique is an important tool in control theory, offering powerful methods for designing robust feedback controllers that stabilize dynamical systems while ensuring desired performance levels in the presence of uncertainties and external disturbances. The idea of this method relies upon the concept of minimizing the amplification from exogenous inputs to the system outputs, as measured in the $L^2$-norm. Specifically, the goal is to determine a feedback controller such that the transfer function from the disturbance to the output has an $H^\infty$-norm less than a prescribed threshold. This formulation naturally leads to an optimization problem in the frequency domain.

The origins of $H^\infty$-control can be traced back to the seminal work of Zames \cite{zames}, who framed the problem as one of minimizing the induced norm of certain operators. Subsequent developments led to state-space formulations, notably in the contributions by Glover \cite{glover}, and the derivation of necessary and sufficient conditions for solvability via algebraic Riccati equations. These advancements were crucial for enabling practical computation and implementation of $H^\infty$ controllers in finite-dimensional settings. The extension of $H^\infty$-control method to infinite-dimensional systems, particularly those governed by partial differential equations, posed substantial mathematical challenges. These were addressed in the literature through the application of semigroup theory and operator-theoretic techniques (see, e.g., \cite{curtain, staffans, vanKeulen2, vanderSchaft}). A significant body of work has been developed to study boundary and distributed control problems for such systems, where we note particularly the hyperbolic case in \cite{BarbuHinf2, BarbuHinf3, BarbuHinf1}.

In the present work, we aim to complement the problem studied in  \cite{marinoschi} in the context of parabolic PDEs with inverse-square potentials of the form $\frac{\lambda} {|x|^2}$, where the author was concerned with the $H^\infty$-control problem for linear infinite-dimensional parabolic systems involving singular potentials of Hardy type. Here, our study extends part of the results in \cite{marinoschi} by adding  a convective term to the Hardy potential, namely, our equation will be
\begin{equation}    \label{ec:firsteq}
    y_t - \Delta y - \lmb\frac{y}{\abs{x}^2} - a(x)y - v\cdot \nabla y = B_1w(t) + B_2u(t) \;, \;\;\;  \text{in}\;\;(0,\infty)\times \Omega,
\end{equation}
where $\Omega$ is a smooth domain in $\R^N$, $N\geq 3$, $v$ is a convection vector, $w$ is a perturbation and $u$ is the control. More details about the functions and operators in \eqref{ec:firsteq} will be given in the next sections.

Singular inverse-square potentials are not only mathematically intriguing but also play an important role in modeling phenomena across several fields. In quantum mechanics, they appear in relation to inverse-square interactions and uncertainty principles, e.g., \cite{baras, fefferman}. In mathematical physics, they are linked to spectral theory and the asymptotic behavior of heat kernels (\cite{davies}), as well as in combustion theory (\cite{bebernes, gelfand}). From the perspective of PDEs and control theory, the handling of problems with the presence of Hardy potentials requires refined analytical tools due to the lack of standard regularity and the appearance of critical parameters. The literature addressing these issues includes notable works on observability and controllability for equations with singular coefficients, e.g., \cite{cazacuSIAM, ervedoza, vancost-zuazua}.

The present work also deals with the classical Hardy inequality, which plays a central role in our analysis. This inequality states that for any domain $\Omega \subset \mathbb{R}^N$, $N \geq 3$, with $0 \in \Omega$, it holds that
\begin{equation} \label{ec:HardyClasic}
\int_\Omega |\nabla u|^2 dx \geq \lambda \int_\Omega \frac{u^2}{|x|^2} dx, \quad \forall u \in C_c^\infty(\Omega),
\end{equation}
where $\lmb\leq H_N := \left( \frac{N-2}{2} \right)^2$. It is important to mention that $H_N$ is the optimal constant in the inequality \eqref{ec:HardyClasic}, in the sense that the inequality does not hold with a bigger constant. 
The theory surrounding Hardy inequalities has grown significantly, with extensions and generalizations documented in works such as \cite{azorero, balinsky, brevaz, hardy2, kufner}. 

We will address two situations related to the $H^\infty-$control problem, presented in Section 2, associated to \eqref{ec:firsteq}: the subcritical case for $\lmb< H_N$, discussed in Section 3, and the more challenging case for the critical parameter $\lmb=H_N$ in Section 4, treated in a special functional framework.

\section{$H^\infty$-control problem presentation and preliminaries}

\quad In this section, we give a short description of how $H^\infty$-control method works and we list some preliminaries. Consider the system
\begin{equation}  \label{ec:E}
\left\{ \begin{array}{l l}   
y'(t) =Ay(t) + B_1w(t) + B_2u(t)  ,& \text{for}\;\; t\in (0,\infty),   \\
z(t)=C_1y(t)+D_1u(t)  ,&\text{for} \;\;t\in (0,\infty),    \\
y(0)=y_0     \\    
\end{array} \right.
\end{equation}
where $y$ is the system state, $w$ is the perturbation input of the system, $u$ is the control input, $z$ is the performance output, $y_0$ is the initial state and $A$, $B_1$, $B_2$, $C_1$ and $D_1$ are linear operators defined in relation with the real Hilbert spaces $H$, $U$, $W$ and $Z$ identified with their duals which will be well precised later. We assume $A$ is a linear and continuous operator $A\in L(D(A),H)$ with the domain $D(A)=\{y\in H \;|\; Ay\in H \}$ dense in $H$. 
We denote by $A^*$ the adjoint of $A$, with domain $D(A^*)$, which is a closed subspace of $H$, so that it becomes a Hilbert space endowed with a scalar product induced by the norm 
\begin{equation}
    \|y\|_{D(A^*)}:= \left( \|A^* y\|_H^2 + \|y\|_H^2 \right)^\frac{1}{2}, \quad \forall y\in D(A^*).    
\end{equation}
We denote by $(D(A^*))'$ the dual space of $D(A^*)$, which is the completion of $H$ in the norm $\|y\|=\||(A-\lmb_0 I)^{-1}y\|_H$, for $\lmb_0$ regular value of $A$, i.e. $A-\lambda_0 I$ is invertible.

The following definitions will be needed in what follows. 
\begin{definition}   \label{defExpStab}
    An operator $\mathcal{A}$ generates an exponentially stable semigroup $e^{\mathcal{A}t}$ if there exist positive constants $\alpha$ and $C$ such that 
    \begin{equation}
        \|e^{\mathcal{A}t}y\|_H \leq C e^{-\alpha t}\|y\|_H, \;\; \forall y\in H, \;\; \forall t\geq 0.
    \end{equation}
\end{definition} 
According to \cite{datko}, this property is equivalent to
\begin{equation}  \label{ec:datko}
    \int_0^\infty \|y(t)\|_H^2 dt \;<\; \infty, \;\;\forall y\in H.
\end{equation}
\begin{definition}
    Let $\mathcal{A}$ and $\mathcal{C}$ be linear operators on the Hilbert spaces $X$ and $Y$ respectively. The pair $(\mathcal{A},\mathcal{C})$ is called exponentially detectable if there exists an operator $K\in L(Y,X)$ such that $\mathcal{A}+K\mathcal{C}$ generates an exponentially stable $C_0$-semigroup.
\end{definition}

We can now turn our attention to the initial system \eqref{ec:E} and give some insights about the $H^\infty$-control problem. Assume that, under certain conditions, the system \eqref{ec:E} has a mild solution $y\in C([0,T],H)$ for all $T>0$, any control $u\in L^2(0,T;U)$ and any perturbation $w\in L^2(0,T;W)$. Also, assume that $u$ is chosen in feedback form, meaning there exists a linear, closed and densely defined operator $F: H\rightarrow U$ such that $u=Fy$. This translates to system \eqref{ec:E} becoming
\[   
\left\{ \begin{array}{l l}   
y'(t) = (A+B_2F) y(t) + B_1w(t)  ,& \text{for}\;\; t\in (0,\infty),   \\
z(t) = (C_1+D_1F) y(t)  ,&\text{for} \;\;t\in (0,\infty),    \\
y(0) = y_0  .     \\    
\end{array} \right.
\]

Consequently, the solution of the system \eqref{ec:E}, seen as a pair $(y(t),z(t))$, is only depending on the perturbation $w$. According to \cite[Definition 2.3, pg. 106]{pazy}, the mild solution for the system is 
\begin{align}   \notag
    y(t) & = e^{(A+B_2F)t}y_0 + \int_0^t e^{(A+B_2F)(t-s)}B_1w(s) ds, \\
    z(t) & = (C_1+D_1F)e^{(A+B_2F)t}y_0 + G_Fw(t)  \notag
\end{align}
where $$G_Fw(t):=(C_1+D_1F) \int_0^t e^{(A+B_2F)(t-s)}B_1w(s) ds.$$ This is a linear operator from $L^2(\R_+;W)$ to $L^2(\R_+;Z)$ and represents the transfer of effects of perturbation $w$ to the output $z$ of the system and satisfies $G_Fw(t)=z(t)$ for $y_0=0$. 

The aim of $H^\infty-$control problem is to find a suitable feedback controller $F$ which stabilizes exponentially the system in the sense of Definition \ref{defExpStab} and satisfies a prescribed performance of $G_Fw$ in terms of a given constant $\gamma$, i.e., given $\gamma>0$ then $\|G_F\|_{L\left(L^2(\R_+;W),L^2(\R_+;Z)\right)}<\gamma$ in the operator norm. For correlations of this technique with the Hardy space $H^\infty$ one can check \cite{curtain, marinoschi}. 

Our goal is to give an application of the general result in \cite{marinoschi}, which solves the $H^\infty-$control problem for parabolic systems, by considering a singular operator with a convective term. For the sake of completeness, we recall this result, based on the following additional hypotheses for system \eqref{ec:E}:\\
\vspace{0.3cm}
$(i_1)$ $A$ is the infinitesimal generator of an analytic $C_0$-semigroup $e^{At}$ on the Hilbert space $H$ and $e^{At}$ is compact for $t>0$. Moreover, 
\begin{equation}   \label{ec:ipi1}
    B_1\in L(W,H),\; B_2\in L(U,D(A^*)'),\; C_1\in L(H,Z),\; D_1\in L(U,Z).
\end{equation}
$(i_2)$ The pair $(A,C_1)$ is exponentially detectable and 
\begin{equation}    \label{ec:ipi3}
    \int_0^\infty \|B_2^*e^{(A^*+C_1^*K^*)t}y\|_U dt \leq C\|y\|_H, \;\; \forall y\in H. 
\end{equation}\\
$(i_3)$ The following relations hold
\begin{equation}   \label{ec:ipi4}
    \|D_1^*D_1u\|_{U^*}=\|u\|_U \text{\;\;and\;\;} D_1^*C_1=0.
\end{equation}

Now we can state the aforementioned theorem.
\begin{theorem} \cite[Theorem 3.1]{marinoschi}   \label{thmGM}
    Assume that $(i_1)-(i_3)$ hold and let $\gamma>0$. Suppose there exists an operator $F\in L(H,U)$ such that $A+B_2F$ generates an exponentially stable $C_0$-semigroup on $H$ and that
    \begin{equation}    \label{ec:hinfTh}
        \|G_F\|_{L\left(L^2(\R_+;W),L^2(\R_+;Z)\right)} <\gamma.
    \end{equation}
    Then, there exists a Hilbert space $\mathcal{X}\subset H$ with dense and continuous injection and an operator 
    \begin{equation}   \label{ec:propTh}
        P\in L(H,H)\cap L(\mathcal{X},D(A^*)) \;\;\text{with}\;\; P=P^*\geq 0
    \end{equation}
    which satisfies the following algebraic Ricatti equation
    \begin{equation}    \label{ec:ricatti}
        A^*Py + P(A-B_2B_2^*P + \gamma^{-2}B_1B_1^*P)y + C_1^*C_1y = 0, \;\; \forall y\in\mathcal{X}.
    \end{equation}
    Here, $B_2^*P\in L(\mathcal{X},U)$ and the operators 
    \begin{equation}
        \Lambda_P:= A-B_2B_2^*P + \gamma^{-2}B_1B_1^*P \;\;\text{and}\;\; \Lambda_P^1:=A-B_2B_2^*P,
    \end{equation}
    with the domain $\mathcal{X}$, generate exponentially stable semigroups on $H$. Moreover, the feedback control $\tilde{F}:=-B_2^*P$ solves the $H^\infty-$problem, that is 
    \begin{equation}    \label{ec:G_F-norm}
        \|G_{\tilde{F}}\|_{L\left(L^2(\R_+;W),L^2(\R_+;Z)\right)} <\gamma.
    \end{equation}
    Conversely, assume that there exists a solution $P$ to equation \eqref{ec:ricatti} with the properties \eqref{ec:propTh} and such that the corresponding operators $\Lambda_P$ and $\Lambda_P^1$ generate exponentially stable semigroups on $H$. Then, the feedback operator $\tilde{F}:=-B_2^*P$ solves the $H^\infty$-problem \eqref{ec:hinfTh}.
\end{theorem}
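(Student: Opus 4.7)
The plan is to prove the two implications of Theorem \ref{thmGM} separately, following the standard game-theoretic strategy for infinite-dimensional $H^\infty$-control, with $(i_1)$--$(i_3)$ replacing the finite-dimensional hypotheses.

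For the direct implication, I would associate with \eqref{ec:E} the indefinite quadratic cost
\begin{equation*}
J(y_0;u,w) := \int_0^\infty \bigl(\|z(t)\|_Z^2 - \gamma^2\|w(t)\|_W^2\bigr)\,dt
\end{equation*}
and the zero-sum differential game with value $V(y_0) := \inf_u \sup_w J(y_0;u,w)$. The assumption $\|G_F\|<\gamma$ for some stabilizing $F$, combined with $(i_3)$ and \eqref{ec:ipi4}, yields the a priori estimate $0\le V(y_0)\le C\|y_0\|_H^2$. Exploiting the linear-quadratic structure, I would show that $V$ is a quadratic form in $y_0$, giving a symmetric non-negative $P\in L(H,H)$ with $V(y_0)=(Py_0,y_0)_H$. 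The Hilbert space $\mathcal{X}$ is then defined as the natural domain on which $P$ lands in $D(A^*)$.

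Next, I would derive the Riccati equation \eqref{ec:ricatti} from the Hamilton--Jacobi--Isaacs principle: the saddle strategies are formally $u^\ast=-B_2^\ast Py$ and $w^\ast=\gamma^{-2}B_1^\ast Py$, and substituting them into the dynamics and using $D_1^\ast C_1=0$ together with $\|D_1^\ast D_1 u\|_{U^\ast}=\|u\|_U$ produces \eqref{ec:ricatti}. Exponential stability of the closed-loop generators $\Lambda_P$ and $\Lambda_P^1$ on $H$ would follow from Datko's criterion \eqref{ec:datko} applied to the finite values of $V$ along trajectories, together with the detectability of $(A,C_1)$ guaranteed by $(i_2)$.

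For the converse, set $\tilde F := -B_2^\ast P$ and consider the closed-loop system driven by an arbitrary $w\in L^2(\R_+;W)$ with $y_0=0$. The key identity is the completion-of-the-square
\begin{equation*}
\frac{d}{dt}(Py(t),y(t))_H = -\|z(t)\|_Z^2 + \gamma^2\|w(t)\|_W^2 - \bigl\|\gamma w(t)-\gamma^{-1}B_1^\ast Py(t)\bigr\|_W^2,
\end{equation*}
obtained by direct computation using \eqref{ec:ricatti} and \eqref{ec:ipi4}. Integrating on $(0,T)$, using the exponential stability of $\Lambda_P^1$ to let $T\to\infty$ with $P\ge 0$, one obtains the strict inequality $\|z\|_{L^2(\R_+;Z)}^2<\gamma^2\|w\|_{L^2(\R_+;W)}^2$, which is precisely \eqref{ec:G_F-norm}.

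I expect the main obstacle to lie in the forward direction, specifically in showing that the abstract operator $P$ enjoys the regularity \eqref{ec:propTh}, namely that $P\mathcal{X}\subset D(A^\ast)$ and $B_2^\ast P\in L(\mathcal{X},U)$. Since $(i_1)$ only ensures $B_2\in L(U,D(A^\ast)')$ rather than into $H$, the term $B_2^\ast P$ is inherently unbounded on $H$, and extracting the correct domain for the Riccati equation requires a careful interpolation argument combining the analyticity of $e^{At}$, the compactness of the semigroup, and the integrability bound \eqref{ec:ipi3}.
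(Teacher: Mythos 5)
Note first that the paper you are being compared against does not prove this statement at all: Theorem \ref{thmGM} is quoted verbatim from \cite[Theorem 3.1]{marinoschi} and used as a black box, the contribution of the present paper being only the verification of the hypotheses $(i_1)$--$(i_3)$ for the singular operator with convection. So your sketch has to stand on its own, and as written it has two genuine gaps.

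In the converse direction, the completion-of-squares identity integrated over $(0,T)$ with $y_0=0$ and $P\geq 0$ only yields $\int_0^\infty\|z(t)\|_Z^2\,dt \leq \gamma^2\int_0^\infty\|w(t)\|_W^2\,dt - \int_0^\infty\bigl\|\gamma w(t)-\gamma^{-1}B_1^*Py(t)\bigr\|_W^2\,dt$, hence $\|G_{\tilde F}\|\leq\gamma$; the strict inequality \eqref{ec:G_F-norm} does not follow ``precisely'' as you claim, since the subtracted square can be arbitrarily small relative to $\|w\|_{L^2}^2$. Strictness is exactly where the hypothesis that $\Lambda_P=A-B_2B_2^*P+\gamma^{-2}B_1B_1^*P$ generates an exponentially stable semigroup enters, and you never use it: exponential stability of $\Lambda_P$ makes the disturbance transformation $w\mapsto \tilde w:=w-\gamma^{-2}B_1^*Py$ boundedly invertible on $L^2(\R_+;W)$, so that $\|\tilde w\|_{L^2}\geq c\|w\|_{L^2}$ for some $c>0$, turning the identity into a uniform bound $\|z\|_{L^2}^2\leq\gamma^2(1-c^2)\|w\|_{L^2}^2$ and hence into the strict operator-norm inequality. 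In the forward direction, the genuinely hard content of the theorem --- the existence of $P$ with the regularity \eqref{ec:propTh}, the identification of the space $\mathcal{X}$, the meaning of $B_2^*P\in L(\mathcal{X},U)$ when $B_2$ maps only into $D(A^*)'$, and the rigorous (not merely formal Hamilton--Jacobi--Isaacs) derivation of \eqref{ec:ricatti}, all of which rest on the integrability condition \eqref{ec:ipi3} together with the analyticity and compactness of $e^{At}$ --- is precisely what you defer to ``a careful interpolation argument'' without supplying it. Acknowledging the obstacle is not the same as overcoming it, so the proposal is a plausible roadmap along the standard game-theoretic route but not a proof of either implication.
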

In the next sections, we denote by $\mathcal{D}(\Omega )$ the
space $C_{c}^{\infty }(\Omega )$ of infinitely differentiable functions with
compact support in $\Omega ,$ equipped with the strict inductive limit
topology and by $\mathcal{D}^{\prime }(\Omega )$ its strongly dual, that is
the space of all linear functionals defined on $C_{c}^{\infty }(\Omega )$
with values in $\mathbb{R}$. The pairing between two dual spaces $X$ and $%
X^{\prime }$ is denoted by $\left\langle \cdot ,\cdot \right\rangle
_{X^{\prime },X}$ and the norm in a normed space $X$ will be indicated by $%
\left\Vert \cdot \right\Vert _{X}.$ For simplicity, we shall use in some
places the notations: $H:=L^{2}(\Omega ),$ $V:=H_{0}^{1}(\Omega ),$ $%
V^{\prime }=H^{-1}(\Omega ),$ where $V\subset H\subset V^{\prime }$ with
continuous, dense and compact injections. In particular, the scalar product
and the norm in $L^{2}(\Omega )$ will be denoted by $\left( \cdot ,\cdot
\right) _{2}$ and $\left\Vert \cdot \right\Vert _{2},$ respectively. We denote by $\textbf{n}$ the outward unit normal vector to $\prt\Om$.

\section{The subcritical case}

\quad We are interested in solving the $H^\infty-$control problem for the following system
\begin{equation}   \label{ec:Esingular}
\left\{ \begin{array}{l l}   
y_t - \Delta y -\lmb \frac{y}{|x|^2} - a(x)y - v\cdot \nabla y = B_1w(t) + B_2u(t)  ,& \text{in}\;\; (0,\infty)\times\Om,\\
y=0  ,&\text{on}\;\; (0,\infty)\times\Gamma, \\
y(0)=y_0  ,&\text{in} \;\;\Om,     \\    
z(t)=C_1y+D_1u(t)  ,&\text{in} \;\;  (0,\infty)\times\Om,
\end{array} \right.
\end{equation}
where $\Om$ is an open bounded subset of $\R^N$, $N\geq 3$, with $0\in\Om$ and sufficiently smooth boundary $\Gamma=\prt\Om$, the parameter $\lmb$ satisfies 
\begin{equation}
    0<\lmb< H_N,
\end{equation}
$a$ is a function defined as 
\begin{equation}    \label{ec:ec00}
a(x)=a_0\chi_{\Om_0},\;\; a_0>0,\;\; \Om_0\subset\Om,
\end{equation}
$\chi_{\Om_0}$ is the characteristic function of $\Om_0$ and 
\begin{equation}      \label{ec:vCond}
v\in (L^\infty(\Om))^N, \;\;\text{with the property that}\;\;\; \nabla \cdot v\in L^\infty(\Om).
\end{equation} 
Let $y_0\in L^2(\Om)$. We consider
\begin{equation}
    H = W = Z = L^2(\Om), \;\;\; U=\R.     \label{ec:ec01}
\end{equation}
We define the operators 
\begin{align} 
    B_1:&L^2(\Omega)\to L^2(\Omega), \;\; B_1w\;=\; \chi_{\omega_1}(\cdot) w,    \label{ec:ec02}\\
    B_2:&\R\to L^2(\Omega), \;\; B_2 u\;=\;b(\cdot) u,         \label{ec:ec03}\\
    C_1:&L^2(\Omega)\to L^2(\Omega), \;\; C_1y\;=\; \chi_{\Om_C}(\cdot)y,    \label{ec:ec04}\\
    D_1:&\R\to L^2(\Omega), \;\; D_1 u\;=\;\chi_{\Om\setminus\Om_C}(\cdot)u,   \label{ec:ec05}
\end{align}
with conditions 
\begin{equation}
b\in L^2(\Om) \quad \text{and} \quad\omega_1\subset \subset \Om,\;\; \Om_0\subset\subset\Om_C\subset\Om.    \label{ec:ec06}
\end{equation}
We also consider the operator  $A:D(A)\subset L^2(\Om)\rightarrow L^2(\Om)$,
\begin{equation}    \label{ec:defA}
    \langle Ay, z \rangle_{V',V} := \int_\Omega \left\{ -\nabla y \cdot \nabla z + \lambda \frac{y z}{|x|^2} + a(x) y  z + (v \cdot \nabla y) z \right\} dx, \;\;\text{for all} \;\; z\in H_0^1(\Om),
\end{equation}
with the domain 
\begin{align}
    D(A) & =\{y\in H_{0}^{1}(\Omega );\text{ }Ay\in L^{2}(\Omega )\}     \notag\\
    & = \{y\in H_0^1(\Om);\; \exists\; C_y>0 \text{ such that } \left\vert \langle Ay,z \rangle_{V',V} \right\vert \leq C_y \|z\|_2,\; \text{for all}\; z\in H_0^1(\Omega)\}.
\end{align}  

Thus, the equation from the system can be rewritten as
\begin{equation}  \label{ec:ec1}
y'(t) = A y(t) + B_1(x) w(t) + B_2(x) u(t), \;\;\;t > 0.
\end{equation}

Our main result in this case is the following.
\begin{theorem}   \label{thm1}
    Let $\gamma>0$ and $0<\lmb< H_N$. Consider $A,\; B_1,\; B_2,\; C_1$ and $D_1$ defined by \eqref{ec:defA} and \eqref{ec:ec02}-\eqref{ec:ec05}, respectively, and assume \eqref{ec:vCond}. Then, there exists $\tilde{F}\in L(L^2(\Om),\R)$, which solves the $H^\infty$-control problem for system \eqref{ec:Esingular}, given in feedback form by
    $$\tilde{F}=-B^*_2P,$$
    where the operator $P$ satisfies the algebraic Ricatti equation \eqref{ec:ricatti}.
\end{theorem}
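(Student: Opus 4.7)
The plan is to apply Theorem \ref{thmGM} by verifying, in the concrete functional setting of \eqref{ec:Esingular}, the structural hypotheses $(i_1)$--$(i_3)$ together with the existence of a pre-stabilizing feedback $F \in L(H,U)$ satisfying $\|G_F\| < \gamma$. Once these are secured, the existence of $P$ solving \eqref{ec:ricatti} and the explicit feedback $\tilde{F} = -B_2^* P$ follow directly.

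For $(i_1)$, the subcriticality $\lmb < H_N$ combined with the Hardy inequality \eqref{ec:HardyClasic} gives
\begin{equation*}
\int_\Omega |\nabla y|^2 \, dx - \lmb \int_\Omega \frac{y^2}{|x|^2} \, dx \geq \Bigl(1 - \frac{\lmb}{H_N}\Bigr) \int_\Omega |\nabla y|^2 \, dx, \qquad y \in H_0^1(\Omega).
\end{equation*}
An integration by parts in the convection term (using $y|_\Gamma = 0$ and $\nabla\cdot v \in L^\infty(\Omega)$), together with the boundedness of $a$, yields the coercivity estimate $-\langle Ay, y\rangle_{V',V} + C\|y\|_2^2 \geq c\|y\|_V^2$ for some $c,C>0$. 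Standard form-method arguments then produce an analytic $C_0$-semigroup $e^{At}$ on $L^2(\Omega)$, whose compactness for $t>0$ follows from the compact injection $V \hookrightarrow H$. The mapping properties \eqref{ec:ipi1} of $B_1,B_2,C_1,D_1$ are immediate from \eqref{ec:ec02}--\eqref{ec:ec05} and $b\in L^2(\Omega)$.

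Condition $(i_3)$ reduces to a direct computation: $D_1^* z = \int_{\Omega\setminus\Omega_C} z\,dx$, whence $D_1^* D_1 u = |\Omega\setminus\Omega_C|\,u$ so that $\|D_1^* D_1 u\|_\R = |u|$ after the normalization $|\Omega\setminus\Omega_C|=1$ (implicit in \eqref{ec:ec05}, or obtained by rescaling $D_1$); also $D_1^* C_1 y = \int_\Omega \chi_{\Omega\setminus\Omega_C}\chi_{\Omega_C}\, y\, dx = 0$ by disjointness of supports. For the detectability in $(i_2)$, the compactness of $e^{At}$ makes the spectrum of $A$ discrete with only finitely many eigenvalues of nonnegative real part, and a unique continuation argument for the subcritical elliptic operator $-\Delta - \lmb/|x|^2 - a(x) - v\cdot\nabla$ prevents any such eigenfunction from vanishing on the open set $\Omega_C \supset \Omega_0$. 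A finite-rank $K \in L(L^2(\Omega),L^2(\Omega))$ compensating the unstable modes then renders $A+KC_1$ exponentially stable, and the integrability \eqref{ec:ipi3} is inherited from the resulting exponential decay of $e^{(A^*+C_1^* K^*)t}$ combined with $B_2^* \in L(L^2(\Omega),\R)$.

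It remains to construct $F$ with $A+B_2 F$ exponentially stable and $\|G_F\|<\gamma$. Stabilizability of $(A,B_2)$ follows by the same spectral/unique-continuation mechanism applied to the actuator profile $b$, producing a finite-dimensional $F_0$ that stabilizes $A+B_2 F_0$ exponentially with tunable decay rate $\alpha$. From the representation $G_F w(t)=(C_1+D_1F)\int_0^t e^{(A+B_2F)(t-s)}B_1 w(s)\,ds$ and Young's convolution inequality one gets $\|G_F\|_{L(L^2(\R_+;W),L^2(\R_+;Z))} \leq \|C_1+D_1 F\|\,\|B_1\|/\alpha$, so enlarging the decay rate enforces the prescribed bound $\|G_F\|<\gamma$. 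Theorem \ref{thmGM} then delivers $P$ satisfying \eqref{ec:ricatti} and the feedback $\tilde{F}:=-B_2^* P$ solving the $H^\infty$-problem. The main obstacle I foresee is precisely this last step: reconciling fast exponential decay of $A+B_2 F$ with a controlled growth of $\|F\|$ so that both the stability and the performance bound $\|G_F\|<\gamma$ hold for an arbitrary $\gamma>0$, which may in practice require a more refined stabilizability argument or an implicit largeness assumption on $\gamma$.
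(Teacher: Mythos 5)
Your overall strategy---verifying $(i_1)$--$(i_3)$ for the concrete operators and then invoking Theorem \ref{thmGM}---is exactly the paper's. Within that, two of your verifications take different (legitimate) routes. For analyticity, the paper writes $A=A_0+B$ with $By=v\cdot\nabla y$, proves $B$ is closed and relatively bounded with arbitrarily small relative bound, and applies Pazy's perturbation theorem, whereas you invoke sesquilinear form methods; both work. For detectability, the paper does not use spectral truncation plus unique continuation at all: it takes the explicit $K=-kI$ with $k>\omega_0$, derives an energy estimate from the subcritical Hardy bound \eqref{ec:Hardy-consq}, and concludes exponential stability of $A+KC_1$ via Datko's criterion, after which \eqref{ec:ipi3} follows from boundedness of $B_2^*$. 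Your spectral route is workable in principle but rests on a unique continuation property for $-\Delta-\lmb|x|^{-2}-a-v\cdot\nabla$ that you assert rather than prove (it needs at least a reduction to $\Om\setminus\{0\}$), while the paper's choice of $K$ avoids that machinery. Your observation that \eqref{ec:ipi4} forces the normalization $|\Om\setminus\Om_C|=1$ is correct and is glossed over in the paper.

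The genuine gap is in your final step, and you have diagnosed it yourself. Your attempt to manufacture an $F$ with $A+B_2F$ exponentially stable and $\|G_F\|<\gamma$ fails on two counts. First, exponential stabilizability of $(A,B_2)$ is not automatic: $B_2$ is the rank-one operator $u\mapsto b(\cdot)u$ with $b\in L^2(\Om)$ completely arbitrary, so there is no ``unique continuation applied to the actuator profile $b$''; if $b$ is orthogonal to an unstable adjoint eigenfunction, no feedback of the form $B_2F$ can move the corresponding eigenvalue. Second, even granting stabilizability with tunable decay rate $\alpha$, the bound $\|G_F\|\le\|C_1+D_1F\|\,\|B_1\|/\alpha$ cannot be pushed below an arbitrary $\gamma>0$, because enlarging $\alpha$ forces $\|F\|$, hence $\|C_1+D_1F\|$, to grow; in fact, since the disturbance enters on $\omega_1$ and the output observes $y$ on $\Om_C$, the achievable attenuation level has a positive infimum in general, so no argument of this type can succeed for every $\gamma>0$ without extra hypotheses. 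Note, however, that the paper does not attempt this step either: its proof of Theorem \ref{thm1} consists solely of checking $(i_1)$--$(i_3)$ and citing Theorem \ref{thmGM}, so the existence of a $\gamma$-suboptimal stabilizing $F$ is effectively carried along as an implicit standing hypothesis inherited from Theorem \ref{thmGM} rather than verified. In short, your hypothesis-checking is sound and comparable to the paper's; the additional construction you attempt addresses a real hypothesis that the paper leaves implicit, but as written it is not correct and cannot be repaired in the stated generality.
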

First, notice that, due to $0<\lmb<H_N$, the inequality \eqref{ec:HardyClasic} gives us the following estimate 
\begin{equation}  \label{ec:Hardy-consq}
    \int_\Om \left( |\nabla y|^2 - \lmb \frac{|y|^2}{|x|^2} \right) dx \geq C_N \int_{\Om} |\nabla y|^2 dx, \quad C_N:= 1-\frac{\lmb}{H_N}>0,\;\;\;\forall y \in H_0^1(\Om),
\end{equation}
To prove Theorem \ref{thm1}, we have to check the hypotheses $(i_1)-(i_3)$. 

$(i_1)$ By \eqref{ec:ec01}-\eqref{ec:ec05}, it holds that
\begin{equation}
    B_1, C_1\in L(L^2(\Om),L^2(\Om)),\; B_2, D_1\in L(\R,L^2(\Om)).  \label{ec:operatori1}
\end{equation}
\begin{lemma}    \label{lemmaAaccr-subcrt}
For $\lmb<H_N$, the operator $-A$ is $\omega$-m-accretive and $A$ generates a compact $C_0$-semigroup on $L^2(\Om)$, provided that $\omega > \omega_0:=a_0 + \frac{\|\nabla\cdot v\|_\infty}{2}$.
\end{lemma}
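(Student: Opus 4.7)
The plan is to verify the two defining properties of $\omega$-m-accretivity for $-A$ — namely accretivity of $-A+\omega I$ and surjectivity of $\mu I - A$ for some $\mu>0$ — and then to deduce compactness of the generated semigroup by combining compactness of the resolvent (via Rellich--Kondrachov) with analyticity.

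First, I would test $-A$ against $y\in D(A)$ using \eqref{ec:defA}:
\begin{equation*}
\langle -Ay,y\rangle_{V',V} = \int_\Om |\nabla y|^2\, dx - \lmb\int_\Om \frac{y^2}{|x|^2}\, dx - \int_\Om a(x) y^2\, dx - \int_\Om (v\cdot\nabla y)y\, dx.
\end{equation*}
The convection term is handled via integration by parts: since $y=0$ on $\Gamma$,
\begin{equation*}
-\int_\Om (v\cdot\nabla y)y\, dx = -\frac{1}{2}\int_\Om v\cdot \nabla(y^2)\, dx = \frac{1}{2}\int_\Om (\nabla\cdot v) y^2\, dx \geq -\frac{\|\nabla\cdot v\|_\infty}{2}\|y\|_2^2.
\end{equation*}
Combined with the subcritical Hardy estimate \eqref{ec:Hardy-consq} and $0\leq a(x)\leq a_0$, this produces
\begin{equation*}
\langle -Ay,y\rangle_{V',V} + \omega\|y\|_2^2 \geq C_N\|\nabla y\|_2^2 + (\omega-\omega_0)\|y\|_2^2 \geq 0
\end{equation*}
for every $\omega\geq \omega_0$, proving accretivity of $-A+\omega I$.

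Next, for the range condition I would fix $\mu>\omega_0$ and $f\in L^2(\Om)$ and solve $(\mu I - A)y = f$ variationally. The bilinear form on $V=H_0^1(\Om)$ associated with $\mu I - A$,
\begin{equation*}
\mathcal B_\mu(y,z) := \int_\Om \nabla y\cdot\nabla z\, dx - \lmb\int_\Om \frac{yz}{|x|^2}\, dx - \int_\Om a(x) yz\, dx - \int_\Om (v\cdot\nabla y) z\, dx + \mu\int_\Om y z\, dx,
\end{equation*}
is continuous on $V\times V$ (using \eqref{ec:HardyClasic} and $a,v\in L^\infty(\Om)$) and coercive with $\mathcal B_\mu(y,y)\geq C_N\|\nabla y\|_2^2 + (\mu-\omega_0)\|y\|_2^2$ by the same computation as above. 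Lax--Milgram then provides a unique $y\in V$ solving $\mathcal B_\mu(y,z)=(f,z)_2$ for all $z\in V$; by the definition of $D(A)$ one has $y\in D(A)$ and $(\mu I-A)y=f$, so $R(\mu I-A)=L^2(\Om)$. This establishes the $\omega$-m-accretivity of $-A$, and the Lumer--Phillips theorem yields a $C_0$-semigroup $e^{At}$ on $L^2(\Om)$ with $\|e^{At}\|_{L(L^2)}\leq e^{\omega t}$.

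For compactness, the resolvent $(\mu I-A)^{-1}$ maps $L^2(\Om)$ continuously into $V$, and $V\hookrightarrow L^2(\Om)$ is compact by Rellich--Kondrachov, so the resolvent is compact on $L^2(\Om)$. The symmetric principal part $-\Delta - \lmb/|x|^2$ is sectorial thanks to the coercivity \eqref{ec:Hardy-consq} of its form; the bounded multiplicative perturbation $-a(x)$ preserves sectoriality, and the first-order convection $-v\cdot\nabla$ is relatively bounded with arbitrarily small relative bound (by interpolation $\|\nabla y\|_2\leq \varepsilon\|\Delta y\|_2 + C_\varepsilon\|y\|_2$), so the full operator $A$ generates an analytic semigroup. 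Analyticity together with compact resolvent then yields compactness of $e^{At}$ for every $t>0$ (cf. Pazy, Chapter 2). The main delicate point I anticipate is making the sectoriality/analyticity preservation under the convection perturbation rigorous — this is a textbook perturbation argument but requires a careful citation within the sectorial framework.
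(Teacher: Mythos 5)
Your treatment of the core of the lemma coincides with the paper's: accretivity of $-A+\omega I$ via integration by parts on the convection term plus the subcritical Hardy estimate \eqref{ec:Hardy-consq}, surjectivity of $\omega I-A$ by Lax--Milgram applied to the associated bilinear form, and compactness of the resolvent from the $H_0^1$-bound on the solution combined with Rellich--Kondrachov. Up to that point the argument is correct and essentially identical to the paper's proof. You are also right to notice that compactness of the resolvent alone does not give compactness of $e^{At}$ without norm continuity of the semigroup for $t>0$; the paper supplies this through the analyticity established separately in Lemma \ref{lemmaAanltc-subcrt}, so folding an analyticity argument into this lemma is a legitimate (if redundant) way to close that step.

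The genuine problem is the specific analyticity argument you sketch. You propose to treat $-v\cdot\nabla$ as a relatively bounded perturbation with small bound using the interpolation inequality $\|\nabla y\|_2\leq \eps\|\Delta y\|_2+C_\eps\|y\|_2$. That inequality is relative to the Laplacian, whereas the operator being perturbed is $A_0y=\Delta y+\lmb|x|^{-2}y+ay$, and because of the singular potential $D(A_0)$ is \emph{not} contained in $H^2(\Om)$ in general: for $\lmb$ close to $H_N$ the elements of $D(A_0)$ behave like $|x|^{-\gamma_\lmb}$ near the origin, $\Delta y$ and $\lmb|x|^{-2}y$ individually fail to be in $L^2(\Om)$, and only their sum is. Hence $\|\Delta y\|_2$ is not controlled by $\|A_0y\|_2+\|y\|_2$, and your estimate does not yield the Kato-type smallness condition $\|v\cdot\nabla y\|_2\leq \eps\|A_0y\|_2+C_\eps\|y\|_2$ that the perturbation theorem requires. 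The correct route, which is what the paper does in Lemma \ref{lemmaAanltc-subcrt}, is to bound the gradient directly through the form of $A_0$ and the Hardy inequality, namely $\|\nabla y\|_2^2\leq \tfrac{1}{C_N}\|A_0y\|_2\|y\|_2+\tfrac{a_0}{C_N}\|y\|_2^2$, and then apply Young's inequality; this gives the small relative bound with respect to $A_0$ itself without ever invoking $\|\Delta y\|_2$. With that substitution your argument is complete; as written, the sectoriality-preservation step would fail for $\lmb$ near the critical value.
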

\begin{proof}
First step is to show that $-A$ is $\omega$-accretive, meaning that for some $\omega>0$  and all $y\in D(A)$ it holds that $\langle(\omega I-A)y,y\rangle_{V',V}=((\omega I - A)y,y)_2\geq 0$. Taking into account \eqref{ec:Hardy-consq} and $y=0$ on $\prt\Om$, by integration by parts we get
\begin{align}
    ((\omega I-A)y,y)_2 & = \int_{\Om} \left( |\nabla y|^2- \lmb \frac{|y|^2}{|x|^2} \right)  dx + \int_{\Om} (\omega-a) |y|^2  dx - \int_\Om \left( v\cdot \nabla y \right) y dx    
 \notag\\
    & \geq C_N \|\nabla y\|_2^2 + (\omega-a_0) \int_\Om |y|^2 dx - \frac{1}{2} \int_{\prt\Om} |y|^2 (v\cdot \textbf{n}) dx + \frac{1}{2} \int_\Om |y|^2 \nabla\cdot v dx \notag\\
    & \geq C_N \|\nabla y\|_2^2 + \left[\omega - \left( a_0 + \frac{\|\nabla\cdot v\|_\infty}{2} \right)  \right] \|y\|_2^2, \;\; \text{for all}\;y\in D(A).   \label{ec:accretive}   
\end{align}
This shows that $-A$ is $\omega$-accretive for $\omega > \omega_0$.

The second step is to prove that $\omega I - A$ is surjective, i.e. the range of $\omega I-A$ is the entire $L^2(\Om)$ space. Let $f\in L^2(\Om)$. Consider the following bilinear form, defined for every $y,z\in H_0^1(\Om)$: 
\begin{equation}
    \alpha(y, z):=\omega (y, z)_{L^2} - \langle Ay,z\rangle_{V',V}.
\end{equation}
We prove that the equation $\omega y - Ay=f$ has a unique weak solution $y\in D(A)$, namely, there exists $y\in H_0^1(\Omega)$ such that 
\begin{equation}\label{weaksol}
\alpha(y, z)=(f, z)_{L^2}, \quad \forall z\in H_0^1(\Omega).
\end{equation}
This is a consequence of the Lax-Milgram lemma. Indeed, $\alpha$ is a continuous bilinear form on $H_0^1(\Om)$, which is also coercive due to \eqref{ec:accretive} for $\omega > \omega_0 $. We conclude that $\omega I - A$ is surjective. Hence, $-A$ is quasi $m$-accretive and then it generates a $C_0$-semigroup.

It remains to show that $e^{At}$ is compact, that is $(\omega I-A)^{-1}:L^2(\Omega)\mapsto L^2(\Omega)$ is a compact operator for $\omega > \omega_0$. For any $f\in L^2(\Om)$, taking $z=y$ in \eqref{weaksol}  and estimating as in \eqref{ec:accretive}, we obtain that
\begin{equation}
     (\omega - \omega_0) \|y\|_2^2 + C_N \|\nabla y\|_2^2 \leq \|f\|_2 \| y\|_{2}\leq C_P\|f\|_2 \| \nabla y\|_{2},  \label{ec:ec4}
\end{equation}
where the last estimate is the Poincaré inequality with some constant (not necessarily sharp) $C_P=C_P(\Omega)>0$.
Therefore, $\|y\|_{H_0^1(\Om)}\leq C\|f\|_2$ for $\omega$ large enough and some constant $C>0$. Let $\{f_n\}_n$ be a bounded sequence in $L^2(\Om)$. Since $\|(\omega I-A)^{-1}f_n\|_{H_0^1(\Om)}\leq C\|f_n\|_2$  we get that $\{ (\omega I-A)^{-1}f_n \}_n$ is bounded in $H_0^1(\Om)$. Since $H_0^1(\Om)$ is compactly embedded in $L^2(\Om)$, we can subtract a subsequence which converges in $L^2(\Om)$. Hence, $(\omega I-A)^{-1}$ is compact.  
\end{proof}

\begin{lemma}   \label{lemmaAanltc-subcrt}
    The operator $A$ generates an analytic $C_0$-semigroup.
\end{lemma}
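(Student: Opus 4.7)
The plan is to pass to the complexification of the problem and show that the sesquilinear form associated with $-A$ is bounded and sectorial on $V=H_0^1(\Omega)$. Once this is established, analyticity of the generated semigroup follows from the classical Lions–Kato theory of operators associated to sectorial forms (equivalently, the numerical range of $-A$ is contained in a sector of half-angle strictly less than $\pi/2$, which in a Hilbert space characterises analytic semigroups generated by m-accretive operators).

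First I would introduce, on the complexified space $V\times V$, the form
\begin{equation*}
\mathfrak{a}(y,z):=\int_\Omega\left\{\nabla y\cdot\overline{\nabla z}-\lmb\frac{y\overline{z}}{|x|^2}-a(x)y\overline{z}-(v\cdot\nabla y)\overline{z}\right\}dx,
\end{equation*}
so that $-A$ is precisely the operator associated with $\mathfrak{a}$ (shifted by the appropriate sign convention used in \eqref{ec:defA}). Continuity of $\mathfrak{a}$ on $V\times V$ is immediate from Cauchy–Schwarz together with the Hardy inequality \eqref{ec:HardyClasic} for the singular term, and $v\in(L^\infty(\Omega))^N$ for the convective term.

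Next, I would split $\mathfrak{a}(y,y)$ into its real and imaginary parts. Writing $y=y_1+iy_2$ with $y_1,y_2$ real, integration by parts on the convection term (exactly as in the proof of Lemma \ref{lemmaAaccr-subcrt}) gives
\begin{equation*}
\mathrm{Re}\,\mathfrak{a}(y,y)=\int_\Omega\left(|\nabla y|^2-\lmb\frac{|y|^2}{|x|^2}-a|y|^2\right)dx+\frac{1}{2}\int_\Omega|y|^2\,\nabla\cdot v\,dx,
\end{equation*}
\begin{equation*}
\mathrm{Im}\,\mathfrak{a}(y,y)=-\int_\Omega v\cdot(y_1\nabla y_2-y_2\nabla y_1)\,dx.
\end{equation*}
Hardy's inequality in the form \eqref{ec:Hardy-consq} then yields, for $\omega>\omega_0$,
\begin{equation*}
\mathrm{Re}\,\mathfrak{a}(y,y)+\omega\|y\|_2^2\;\geq\;C_N\|\nabla y\|_2^2+(\omega-\omega_0)\|y\|_2^2,
\end{equation*}
while the imaginary part is controlled by $|\mathrm{Im}\,\mathfrak{a}(y,y)|\leq 2\|v\|_\infty\|\nabla y\|_2\|y\|_2$. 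Applying the elementary inequality $2ab\leq\eps a^2+\eps^{-1}b^2$ and choosing $\eps$ and $\omega$ appropriately gives the sectoriality estimate
\begin{equation*}
|\mathrm{Im}\,\mathfrak{a}(y,y)|\leq M\bigl(\mathrm{Re}\,\mathfrak{a}(y,y)+\omega\|y\|_2^2\bigr),\qquad\forall y\in V,
\end{equation*}
for a constant $M>0$ depending only on $\|v\|_\infty$, $C_N$ and $\omega_0$.

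The form $\mathfrak{a}+\omega(\cdot,\cdot)_2$ is therefore continuous, coercive on $V$, and sectorial; by the Lions theorem (see e.g.\ Kato or Tanabe), the operator it generates, namely $-A+\omega I$, generates an analytic $C_0$-semigroup on $L^2(\Omega)$. A bounded shift preserves analyticity, so $A$ itself generates an analytic $C_0$-semigroup, as claimed. The main and essentially only delicate point is the sectoriality bound for the convective term: all first-order contributions must be absorbed into a small multiple of $\|\nabla y\|_2^2$ plus a large multiple of $\|y\|_2^2$, which is possible precisely because we are in the subcritical regime $\lmb<H_N$ and \eqref{ec:Hardy-consq} gives us the full $H_0^1$-control of the leading quadratic form.
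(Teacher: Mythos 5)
Your argument is correct, but it follows a genuinely different route from the paper. The paper writes $A=A_0+B$ with $A_0y=\Delta y+\lmb|x|^{-2}y+ay$ and $By=v\cdot\nabla y$, quotes the analyticity of the semigroup generated by $A_0$ from \cite[Lemma 4.1]{marinoschi}, shows that $B$ is closed and relatively $A_0$-bounded with arbitrarily small relative bound (via $\|\nabla y\|_2^2\leq C_N^{-1}\|A_0y\|_2\|y\|_2+a_0C_N^{-1}\|y\|_2^2$, a consequence of \eqref{ec:Hardy-consq}, followed by Young's inequality), and then invokes the perturbation theorem in \cite[Section 3.2, Theorem 2.1]{pazy}. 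You instead work entirely at the level of the sesquilinear form on the complexification of $V=H_0^1(\Om)$, prove continuity, coercivity of the shifted form and the sectoriality bound $|\mathrm{Im}\,\mathfrak{a}(y,y)|\leq M\bigl(\mathrm{Re}\,\mathfrak{a}(y,y)+\omega\|y\|_2^2\bigr)$, and conclude by the Lions--Kato theory of sectorial forms; your real/imaginary part computations and the absorption of the convective term via $2\|v\|_\infty\|\nabla y\|_2\|y\|_2\leq\eps\|\nabla y\|_2^2+\eps^{-1}\|v\|_\infty^2\|y\|_2^2$ are all sound, and the operator associated with the form is indeed $A$ with the domain \eqref{ec:defA}, so the two constructions agree. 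What your route buys is self-containedness: you need neither the earlier analyticity result for $A_0$ nor Pazy's perturbation theorem, and you get the closedness and relative-boundedness issues for $B$ for free, since everything is absorbed at the form level; moreover the numerical-range picture makes the role of the subcritical condition $\lmb<H_N$ (full $H_0^1$-coercivity via \eqref{ec:Hardy-consq}) completely transparent. What the paper's route buys is modularity -- the convection term is isolated as a small perturbation of an operator already understood in \cite{marinoschi} -- and it is closer in spirit to the direct resolvent estimates the authors later need in the critical case $\lmb=H_N$, where your form argument would not carry over as stated, since $H_0^1$-coercivity fails there and only the weaker control \eqref{ec:HardyCritic} in $W^{1,p}$, $p<2$, is available (a limitation you correctly acknowledge). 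One small imprecision in your wording: the operator associated with the sectorial form $\mathfrak{a}+\omega(\cdot,\cdot)_2$ is $\omega I-A$, and it is its negative, $A-\omega I$, that generates the analytic semigroup; since you then remove the bounded shift $\omega I$, the conclusion for $A$ is unaffected.
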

\begin{proof}
We consider $A=A_0+B$, where
\begin{equation}  \label{eq:Adecomposition}
    A_0y=\Delta y + \lmb\frac{y}{|x|^2}+ay \;\;\;\text{and}\;\;\; By=v\cdot\nabla y   
\end{equation}
are defined in the sense of distributions, with $D(B)=\left\{ y\in L^2(\Om)\;;\; By \in L^2(\Om) \right\}$ and $D(A_0)=\{y\in H_0^1(\Om)\;|\;A_0y\in L^2(\Om) \}$. We note that $D(A_0)\subset D(B)$ and that $A_0$ is the generator of an analytic $C_0$-semigroup (see \cite[Lemma 4.1]{marinoschi}).

To prove the analyticity of $A$, we use the result stated in \cite[Section 3.2, Theorem 2.1, pg. 80]{pazy}. To this end, we need to show that $B$ is closed and there exist constants $c_1, c_2>0$ such that 
\begin{equation}     \label{ec:analytic-condition}
    \|By\|_2\leq c_1\|A_0y\|_2 + c_2\|y\|_2,
\end{equation}
for all $y\in D(A_0)$. 

Take $y_n\rightarrow y$ in $L^2(\Om)$ such that $By_n\rightarrow\eta$ in $L^2(\Om)$. Let $\phi\in C_c^\infty(\Om)$. Then,
\begin{equation}   \notag
    (By_n,\phi)_2= -\int_\Om y_n \nabla\cdot(v\phi) dx \stackrel{n\to\infty}{\longrightarrow} -\int_\Om y \nabla\cdot(v\phi) dx = (By,\phi)_2.   
\end{equation}
On the other hand, 
\begin{equation}
    (B y_n,\phi)_2 \stackrel{n\to\infty}{\longrightarrow} (\eta,\phi)_2,
\end{equation}
hence $By=\eta$, and we conclude that $B$ is closed.   
In order to show \eqref{ec:analytic-condition}, we point out the next estimates, using again \eqref{ec:Hardy-consq}:
\begin{align}   \notag
    (-A_0y,y)_2 & = \int_\Om \left( |\nabla y|^2 - \lmb \frac{|y|^2}{|x|^2} \right) dx - a_0\int_{\Om_0}|y|^2 dx      \notag\\
    & \geq C_N \int_\Om|\nabla y|^2 dx - a_0\int_\Om |y|^2 dx   \notag\\
    & \geq  C_N\|\nabla y\|_2^2 - a_0\|y\|_2^2, \;\; \text{for all}\; y\in D(A_0).
\end{align}
Therefore, 
\begin{align}
    \|\nabla y\|_2^2 & \leq \frac{1}{C_N} (-A_0y,y)_2 + \frac{a_0}{C_N}\|y\|_2^2, \;\;     \notag\\
    & \leq \frac{1}{C_N} \|-A_0y\|_2\|y\|_2 + \frac{a_0}{C_N}\|y\|_2^2.   \label{ec:H01norm}
\end{align}
Using \eqref{ec:H01norm} and Young inequality, we have
\begin{align}
    \|By\|_2^2 &= \|v\cdot \nabla y\|_2^2 \leq \|v\|_\infty^2 \|\nabla y\|_2^2  \leq   \frac{\|v\|_\infty^2}{C_N} \|-A_0y\|_2\|y\|_2 + \frac{\|v\|_\infty^2 a_0}{C_N}\|y\|_2^2    \notag\\
    & \leq \frac{\|v\|^2_\infty}{C_N} \left( \eps \frac{C_N}{\|v\|_\infty^2}\|A_0 y\|_2^2 + \frac{1}{4\eps C_N}\|v\|_\infty^2\|y\|_2^2 \right) + \frac{\|v\|_\infty^2 a_0}{C_N}\|y\|_2^2  \notag\\
    & = \eps \|A_0y\|_2^2 + \frac{\|v\|_\infty^2}{C_N} \Big( \frac{\|v\|_\infty^2}{4\eps C_N} + a_0 \Big)\|y\|_2^2.
\end{align}
This proves the condition \eqref{ec:analytic-condition}. By the theorem mentioned above, we conclude that $A=A_0+B$ is the infinitesimal generator of an analytic semigroup.   
\end{proof}

$(i_2)$ We prove the following lemma.
\begin{lemma}   \label{lemmaAexpstb-subcrt}
    Let $\lmb<H_N$. Then the pair $(A,C_1)$ is exponentially detectable.
\end{lemma}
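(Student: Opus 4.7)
The plan is to verify the Hautus (PBH) detectability criterion on the finite-dimensional unstable spectral subspace of $A$ and then construct the observer gain $K$ by a finite-dimensional pole-placement argument. Throughout, I exploit that by Lemmas~\ref{lemmaAaccr-subcrt}--\ref{lemmaAanltc-subcrt} the operator $A$ generates an analytic $C_0$-semigroup with compact resolvent on $L^2(\Omega)$. Compactness of the resolvent forces $\sigma(A)$ to be a discrete set of isolated eigenvalues of finite algebraic multiplicity, accumulating only at $-\infty$ in real part; so for any fixed $\alpha>0$ only finitely many $\mu\in\sigma(A)$ satisfy $\text{Re}\,\mu\geq-\alpha$. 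Let $E_u$ denote the direct sum of the corresponding generalized eigenspaces and $E_s$ its $A$-invariant complement, so that $L^2(\Omega)=E_u\oplus E_s$, $\dim E_u<\infty$, and $A|_{E_s}$ generates an exponentially stable semigroup.

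The heart of the argument, and the main obstacle, is the Hautus test: I show that no nonzero $\varphi\in\ker(\mu I-A)$ with $\text{Re}\,\mu\geq-\alpha$ lies in $\ker C_1$. If it did, $\varphi$ would vanish on the open set $\Omega_C$ while also satisfying, in the distributional sense on $\Omega$,
\[
-\Delta\varphi-\lambda\frac{\varphi}{|x|^{2}}-a(x)\varphi-v\cdot\nabla\varphi=\mu\varphi.
\]
On $\Omega\setminus\{0\}$ this is a second-order elliptic equation with essentially bounded coefficients (the Hardy potential $\lambda/|x|^2$, the $L^\infty$ potential $a$, and the bounded first-order perturbation $v\cdot\nabla$), and $\varphi$ vanishes on the nonempty open set $\Omega_C\setminus\{0\}$. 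Since $\Omega\setminus\{0\}$ is connected for $N\geq 3$, the strong unique continuation principle for such operators forces $\varphi\equiv 0$ on $\Omega\setminus\{0\}$, hence on $\Omega$, contradicting $\varphi\neq 0$. The same argument extends to Jordan-chain generalized eigenfunctions by induction on the chain length, so the full PBH condition holds on all of $E_u$.

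With the Hautus condition in hand, the restricted pair $(A|_{E_u},C_1|_{E_u})$ is observable in the finite-dimensional sense, and by classical linear systems theory there exists $L_u\in L(L^2(\Omega),E_u)$ placing the spectrum of $A|_{E_u}-L_uC_1|_{E_u}$ strictly inside $\{\text{Re}\,\mu<-\alpha\}$. Setting $K:=-L_u\in L(L^2(\Omega),L^2(\Omega))$, the range condition on $K$ makes $A+KC_1$ block upper triangular with respect to $L^2(\Omega)=E_u\oplus E_s$, with diagonal blocks $A|_{E_u}-L_uC_1|_{E_u}$ and $A|_{E_s}$, both of whose spectra lie in $\{\text{Re}\,\mu<-\alpha\}$. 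Since $K$ and $C_1$ are bounded, $A+KC_1$ is a bounded perturbation of the analytic generator $A$, hence itself an analytic generator with compact resolvent; analyticity yields that the growth bound equals the spectral bound, and thus $e^{(A+KC_1)t}$ decays exponentially. This proves that $(A,C_1)$ is exponentially detectable.
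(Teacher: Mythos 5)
Your argument is correct, but it takes a genuinely different and substantially heavier route than the paper. The paper's proof is constructive and elementary: it chooses the static output injection $K=-kI$ with $k>\omega_0$, so that $KC_1y=-k\chi_{\Om_C}y$ is a large damping supported on $\Om_C\supset\supset\Om_0$, and then runs a single energy estimate on $\frac{dy}{dt}=(A+KC_1)y$ using the Hardy-type bound \eqref{ec:Hardy-consq} together with the divergence bound on $v$, concluding $\int_0^\infty\|y(t)\|_2^2\,dt<\infty$ and invoking Datko's theorem \cite{datko}; this yields an explicit $K$, an explicit decay estimate, and (as the paper notes at the end of the proof) immediately the integral bound \eqref{ec:ipi3} required in $(i_2)$. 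You instead exploit the compact resolvent and analyticity (Lemmas \ref{lemmaAaccr-subcrt}--\ref{lemmaAanltc-subcrt}) to split off the finite-dimensional unstable spectral subspace, verify the PBH condition there by a unique continuation argument (an eigenfunction vanishing on $\Om_C$ solves an elliptic equation with $L^\infty_{\mathrm{loc}}$ coefficients on the connected set $\Om\setminus\{0\}$, hence vanishes identically), and finish by finite-dimensional pole placement plus the spectrum-determined growth property of analytic semigroups applied to the block upper triangular operator $A+KC_1$. This is a valid Triggiani-type decomposition argument, and it buys generality: it would work whenever $\Om_C$ has nonempty interior, without the quantitative interplay between $k$, $a_0$ and $\|\nabla\cdot v\|_\infty$ used in the paper. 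The price is reliance on nontrivial external machinery that you should make explicit if you keep this route: interior elliptic regularity to get $\varphi\in H^2_{\mathrm{loc}}(\Om\setminus\{0\})$ so that a unique continuation theorem for the differential inequality $|\Delta\varphi|\le C(|\varphi|+|\nabla\varphi|)$ (with merely bounded drift $v$) applies, the factorization of $C_1|_{E_u}$ through a finite-dimensional output space before invoking pole placement, and the realness/complex-conjugation symmetry of the spectral projection. Also note that the Jordan-chain induction is superfluous: the PBH observability test only requires that no genuine eigenvector lie in $\ker C_1$. By contrast, the paper's proof is self-contained, quantitative, and avoids spectral theory and unique continuation altogether.
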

To prove that $(A,C_1)$ is exponentially detectable translates to finding $K\in L(L^2(\Om),L^2(\Om))$ such that $A+KC_1$ generates an exponentially stable semigroup, that is
\begin{equation}          \label{ec:expdetect}
   \|e^{(A+C_1K)t}y\|_2  \leq C e^{-\alpha t}\|y\|_2, \;\; \forall y\in L^2(\Om),\;\; \forall t\geq 0, 
\end{equation}
where $C$ and $\alpha$ are positive constants. To prove this, we apply Datko's result in \cite{datko}, which states that the above condition is equivalent to
\begin{equation}    \label{ec:datko}
    \int_0^\infty \|y(t)\|^2_2 dt < \infty, \;\; \forall y\in L^2(\Om). 
\end{equation}
\begin{proof}
Consider 
\begin{equation}
    K=-kI, \;\;\; k> \omega_0 .   \notag
\end{equation}
and set $A_1=A+KC_1$. This is still m-accretive, therefore $-A_1$ generates a $C_0$-semigroup $e^{A_1t}$ on $L^2(\Om)$. Hence $y(t)=e^{A_1t}y_0$ satisfies
\begin{equation}
    \frac{dy}{dt}(t)=A_1y(t), \;\; t\geq 0,\;\; y(0)=y_0.   \label{ec:eqA1}
\end{equation}
Multiplying \ref{ec:eqA1} by $y(t)$, integrating on $\Om$ and applying again the Hardy inequality, we get
\begin{equation}   \notag
    \frac{1}{2}\frac{d}{dt}\|y(t)\|_2^2 + C_N\|\nabla y(t)\|_2^2 + k\int_{\Om_C} |y|^2 dx - \int_{\Om_0} a_0|y|^2 dx - \int_{\Om} \left( v\cdot\nabla y \right) y dx \leq 0.
\end{equation}
Recall that $\Om_0\subset\subset\Om_C$. We integrate from $0$ to $t$:
\begin{align}   
    \frac{1}{2}\|y(t)\|_2^2 & + C_N\int_0^t\|\nabla y(s)\|_2^2 ds + (k-a_0)\int_0^t\int_{\Om_0} |y(s)|^2 dx ds - \int_0^t\int_{\Om} \left( v\cdot\nabla y(s) \right) y(s) dx ds \notag\\
    &\leq \frac{1}{2}\|y_0\|_2^2.   \label{ec:calcul-exp-dect}
\end{align}
Since 
\begin{equation}
    \int_\Om  \left( v\cdot\nabla y \right) y dx = \frac{1}{2} \int_\Om v \cdot \nabla(y^2) dx = -\frac{1}{2} \int_\Om (\nabla \cdot v) y^2 dx \leq -\frac{\|\nabla\cdot v\|_\infty}{2} \|y\|_2,    \notag
\end{equation}
we get by \eqref{ec:calcul-exp-dect} that
\begin{equation}    
    \|y(t)\|_2^2 + 2C_N\int_0^t\|\nabla y(s)\|_2^2 ds + 2( k - \omega_0 ) \int_0^t \|y(s)\|_2^2 dxds \leq \|y_0\|_2^2.   \notag
\end{equation}
Taking into account the bound for $k$, we obtain
\begin{equation}   \notag
    \int_0^t \|y(s)\|_2^2 ds \leq \frac{1}{2(k-\omega_0)} \|y_0\|_2^2, \;\; \text{for all } \;t>0.  
\end{equation}
Letting $t\to\infty$, we get the final estimate 
\begin{equation}   \notag
    \int_0^\infty \|y(s)\|_2^2 ds \leq \frac{1}{2(k-\omega_0)} \|y_0\|_2^2 < \infty.  
\end{equation}
Finally, by \cite{datko}, there exists $\alpha>0$ such that 
\begin{equation}   \notag
    \|e^{(A+KC_1)t} y\|_2 \leq e^{-\alpha t} \|y\|_2, \;\; \forall y\in L^2(\Om),
\end{equation}
that is, the pair $(A,C_1)$ is exponentially detectable. Moreover, it is easy to check that
\begin{equation}   \notag
    \int_0^\infty \left| B_2^* e^{(A^*+C_1^*K^*)t}y \right| dt \leq C \int_0^\infty \|e^{(A^*+C_1^*K^*)t}y\|_2 dt \leq C\|y\|_2 \int_0^\infty e^{-\alpha t} dt = C\|y\|_2,
\end{equation}
for all $y\in L^2(\Om)$.
\end{proof}
$(i_3)$ $\|D_1^*D_1u\|_{U^*}=\|u\|_U$ and $D_1^*C_1=0$. 
Considering \eqref{ec:ec01}-\eqref{ec:ec05}, this can be checked by direct computations. \\

The conditions in Theorem \ref{thmGM} being fulfilled, it follows that the $H^\infty$-control problem for system \eqref{ec:Esingular} has a solution, hence Theorem \ref{thm1} holds. We can now write the $H^\infty-$control problem for system \eqref{ec:Esingular}. Precisely, there exists a Hilbert space $\mathcal{X}\subset L^2(\Om)$ and an operator 
\begin{equation}
    P\in L(L^2(\Om)L^2(\Om))\cap L\left(\mathcal{X},D(A^*)\right), \;\; P=P^* \geq 0,
\end{equation}
which satisfies the algebraic equation \eqref{ec:ricatti}, such that $\tilde{F}:=-B_2^*P$ solves the $H^\infty$-problem for the system \eqref{ec:Esingular}, that is relation \eqref{ec:G_F-norm}. In our case, we can see that $\mathcal{X}=D(A)$. 

As an observation, we can give another representation of the Ricatti equation \eqref{ec:ricatti}, which may be more convenient for numerical computations. We recall that the linear continuous operator $P\in L(L^2(\Om), L^2(\Om))$ can be represented by the Schwartz kernel theorem (see, e.g., \cite[p. 166]{Lions}) as an integral operator with a kernel $P_0\in L^2(\Om \times \Om)$, namely 
\begin{equation}   \label{ec:kernelP}
    P\varphi(x) = \int_\Om P_0(x,\xi)\varphi(\xi) d\xi, \;\; \text{for all}\;\; \varphi\in C_0^\infty(\Om).
\end{equation}
We make use of the same computations as in \cite{marinoschi}, by replacing \eqref{ec:kernelP} in \eqref{ec:ricatti}, and obtain 
\begin{align}    \label{ec:ricattieq}
      \Delta_x P_0(x,\xi) &+ \Delta_\xi P_0(x,\xi) + \lmb P_0(x,\xi) \left(\frac{1}{|x|^2
      } + \frac{1}{\abs{\xi}^2} \right) + v(x) \cdot \nabla_x P_0(x,\xi) + v(\xi) \cdot \nabla_\xi P_0(x,\xi)   \notag\\
      & + \left(a(x) + a(\xi)\right) P_0(x,\xi)  
      - \int_\Om\int_\Om P_0(x,\overline{\xi}) P_0(\overline{x},\xi) b(\overline{\xi})b(\overline{x})d\overline{x}d\overline{\xi}     \notag\\
      \gamma^{-2} &\int_\Om \chi_{\omega_1}(\overline{\xi})P_0(x,\overline{\xi}) P_0(\overline{\xi},\xi) d\overline{\xi} = -\delta(x-\xi)\chi_{\Om_C}(\xi), \;\; \text{in}\; \mathcal{D}'(\Om\times\Om),
    \end{align}
    accompanied by the conditions
    \begin{align}     
        P_0(x,\xi)& =0, \;\; \forall(x,\xi)\in \Gamma\times\Gamma,  \label{ec:ricatticonditions1}\\
        P_0(x,\xi)& =P_0(\xi,x), \;\;\forall (x,\xi)\in\Omega\times\Omega,  \label{ec:ricatticonditions2}\\
        P_0(x,\xi)& \geq 0, \;\;\forall (x,\xi)\in\Omega\times\Omega.    \label{ec:ricatticonditions3}
    \end{align}
    Moreover, we get that
    \begin{equation}    \label{ec:controlformula}
        \tilde{F}y=-\int_\Om\int_\Om b(x)P_0(x,\xi) y(\xi) d\xi dx, \;\;\forall y\in L^2(\Om).
    \end{equation}
    We point out the differences in computations for the terms involving the operator $A$:
\begin{align}
    A^*P\varphi(x) &= \int_\Om \Big(\Delta_x P_0(x,\xi) + \frac{\lmb P_0(x,\xi)}{|x|^2} + a(x)P_0(x,\xi) + v(x)\cdot\nabla_x P_0(x,\xi) \Big)\varphi(\xi) d\xi,  \notag\\
    PA\varphi(x) &= \int_\Om \varphi(\xi)\Big(\Delta_\xi P_0(x,\xi) + \frac{\lmb P_0(x,\xi)}{|\xi|^2} + a(\xi)P_0(x,\xi) + v(\xi)\cdot\nabla_\xi P_0(x,\xi) \Big) d\xi.  \notag
\end{align}  
These lead to the equation \eqref{ec:ricattieq} and accompanying conditions \eqref{ec:ricatticonditions1}-\eqref{ec:ricatticonditions3}, which are equivalent to the Ricatti equation \eqref{ec:ricatti}.

\section{The critical case}

\quad Here, we analyze the problem \eqref{ec:Esingular} in the case $\lmb=H_N$. We consider the same framework as before, along with all notations and definitions from \eqref{ec:ec00}-\eqref{ec:ec06}.
We notice that the choice of critical parameter $\lmb=H_N=\frac{(N-2)^2}{4}$ creates a dichotomy with respect to the sub-critical case $\lmb<H_N$, since the Hardy inequality \eqref{ec:HardyClasic} no longer provides convenient estimates for our needs (e.g., it fails to provide the quasi m-accretivity of $A$). Therefore, we need an improved form of the Hardy inequality, namely the one from \cite{VazquezZuazua} recalled below, and so we move forward with some preliminaries. The treatment of the critical case $\lmb=H_N$ is new even concerning \cite{marinoschi}, without the convection term, since the methods used to assert all the hypotheses are slightly different.

\begin{theorem}\cite[Theorem 2.2]{VazquezZuazua}    \label{thmHardyCritic}
Let $\Om$ be an open bounded subset of $\R^N$, $N\geq 3$. Then for any $1\leq p<2$ there exists $C(p,\Om)>0$ such that 
\begin{equation}    \label{ec:HardyCritic}
\int_\Om \left( |\nabla y|^2 - H_N\frac{|y|^2}{\vert x\vert^2} \right) dx \;\geq\; C(p,\Om)\|y\|_{W^{1,p}(\Om)}^2      
\end{equation}
holds for all $y\in H_0^1(\Om)$.
\end{theorem}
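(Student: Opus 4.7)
The plan is a ground-state factorization followed by a change to log-polar coordinates. I would set $\phi(x):=|x|^{-(N-2)/2}$, the singular ``virtual ground state'' that classically satisfies $-\Delta\phi=H_N\phi/|x|^2$ on $\Om\setminus\{0\}$, and write $y=\phi v$. For $y\in C_c^\infty(\Om\setminus\{0\})$, expanding $|\nabla(\phi v)|^2$, grouping the cross term as $\tfrac12\nabla(\phi^2)\cdot\nabla(v^2)$, and integrating by parts together with the identity $\Delta(\phi^2)=2|\nabla\phi|^2+2\phi\Delta\phi$ would yield the standard factorization identity
\begin{equation}\label{plan:keyid}
\int_\Om\Bigl(|\nabla y|^2-H_N\frac{|y|^2}{|x|^2}\Bigr)\,dx=\int_\Om |x|^{-(N-2)}|\nabla v|^2\,dx.
\end{equation}
This extends to every $y\in H_0^1(\Om)$ by density, since singletons have zero $H^1$-capacity in dimension $N\geq 3$, so $C_c^\infty(\Om\setminus\{0\})$ is dense in $H_0^1(\Om)$.

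The second step is to transform the right-hand side of \eqref{plan:keyid} via the substitution $(s,\theta):=(-\log|x|,\,x/|x|)$. A direct computation in polar coordinates gives
\begin{equation}
\int_\Om |x|^{-(N-2)}|\nabla v|^2\,dx\;=\;\int_{\mathcal{C}}\bigl(|\prt_s v|^2+|\nabla_\theta v|^2\bigr)\,d\theta\,ds,
\end{equation}
on a cylindrical region $\mathcal{C}\subset\R\times S^{N-1}$ that is unbounded in $s$ (corresponding to $|x|\to 0$) and on which $v$ vanishes on the outer boundary (where $y=0$). A Poincaré-type inequality on the compact cross-section $S^{N-1}$, combined with the vanishing of $v$ on the outer trace, then bounds the weighted $L^2$-norm $\int_\Om|x|^{-(N-2)}|v|^2\,dx$ by the right-hand side of \eqref{plan:keyid}.

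To pass back to the $W^{1,p}$-norm of $y=\phi v$, I would split $\nabla y=\phi\nabla v+v\,\nabla\phi$ and apply Hölder's inequality with conjugate exponents $2/p$ and $2/(2-p)$. The term $\int_\Om \phi^p|\nabla v|^p\,dx$ is controlled by $\bigl(\int_\Om\phi^2|\nabla v|^2\,dx\bigr)^{p/2}|\Om|^{(2-p)/2}$. The remaining term $\int_\Om|v\,\nabla\phi|^p\,dx = c\int_\Om|x|^{-p}|y|^p\,dx$ is finite exactly because $p<2$ (so $|x|^{-Np/2}$ is locally integrable) and is dominated, by a second application of Hölder, by the weighted $L^2$-bound on $v$ from the previous step. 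Poincaré's inequality on $\Om$ then upgrades the gradient bound to the full $W^{1,p}$-norm.

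The principal obstacle is that the constant $C(p,\Om)$ must degenerate as $p\uparrow 2$, since \eqref{ec:HardyCritic} is known to fail at the borderline $p=2$; controlling this degeneration explicitly through the Hölder constants and the divergent integrals $\int|x|^{-Np/2}\,dx$ requires care. A technically cleaner alternative would be a compactness-contradiction argument: assume a normalized sequence $y_n\in H_0^1(\Om)$ with $\|y_n\|_{W^{1,p}}=1$ and the left-hand side of \eqref{ec:HardyCritic} tending to zero, use \eqref{plan:keyid} and the Rellich--Kondrachov embedding $W^{1,p}(\Om)\hookrightarrow L^p(\Om)$ to extract a weak limit, and derive a contradiction with the non-attainment of the optimal Hardy constant $H_N$ on bounded domains containing the origin.
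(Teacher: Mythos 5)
This statement is not proved in the paper at all: it is quoted from Vazquez--Zuazua \cite[Theorem 2.2]{VazquezZuazua}, and your skeleton (the ground-state factorization $y=|x|^{-(N-2)/2}v$, the identity $\int_\Om(|\nabla y|^2-H_N|y|^2/|x|^2)\,dx=\int_\Om|x|^{-(N-2)}|\nabla v|^2\,dx$, the passage to the cylinder via $s=-\log|x|$) is indeed the route taken in that reference. However, your closing step has a genuine gap. For the term $\int_\Om|v\,\nabla\phi|^p\,dx=c\int_\Om|x|^{-Np/2}|v|^p\,dx=c\int_\Om|x|^{-p}|y|^p\,dx$ you propose H\"older against the weighted $L^2$-bound $\int_\Om|x|^{-(N-2)}v^2\,dx$ (which equals $\|y\|_{L^2(\Om)}^2$). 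With exponents $2/p$ and $2/(2-p)$ this leaves the complementary factor $\bigl(\int_\Om|x|^{-2p/(2-p)}\,dx\bigr)^{(2-p)/2}$, which is finite only when $2p/(2-p)<N$, i.e. $p<\tfrac{2N}{N+2}$. So as written your argument proves the inequality only for $1\le p<\tfrac{2N}{N+2}$ and breaks down (the auxiliary integral diverges; this is not merely a degenerating constant) on the remaining range $\tfrac{2N}{N+2}\le p<2$, which is nonempty for every $N\ge3$. To cover all $p<2$ you need a stronger weighted estimate than the plain $L^2$ one: for instance the one-dimensional Hardy inequality in the axial variable, $\int_{s_0}^\infty \frac{|v|^2}{(s-s_0)^2}\,ds\le 4\int_{s_0}^\infty|\partial_s v|^2\,ds$ (equivalently, $\int_\Om \frac{|y|^2}{|x|^2\log^2(R/|x|)}\,dx\le C\int_\Om(|\nabla y|^2-H_N|y|^2/|x|^2)\,dx$), or the pointwise bound $|v(s,\theta)|\le (s-s_0)^{1/2}\|\partial_s v(\cdot,\theta)\|_{L^2(s_0,\infty)}$; H\"older against these, with the exponential weight $e^{-N(2-p)s/2}$ coming from $|x|^{-Np/2}\,dx$ in cylinder variables, converges for every $p<2$, and this is essentially what the cited proof does. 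Incidentally, the weighted $L^2$-bound you invoke does not come from a Poincar\'e inequality on the cross-section $S^{N-1}$ but from this same one-dimensional mechanism in $s$ combined with the vanishing of $v$ on the outer trace; that is only a slip of description, whereas the range restriction above is a real gap.

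Your fallback compactness argument does not obviously repair it. The Hardy deficit is a difference of two terms, neither of which is controlled by a $W^{1,p}$-bound alone, so it is not weakly lower semicontinuous along a sequence that is merely normalized in $W^{1,p}$; the relevant loss of compactness is concentration at the origin (the very reason $H_N$ is not attained), and if the weak limit vanishes no contradiction is reached: $\|y_n\|_{L^p}\to0$ with $\|\nabla y_n\|_{L^p}$ bounded away from zero is perfectly compatible with the deficit tending to zero unless one already has an inequality of the type being proved. The constructive cylinder argument with the logarithmic (or pointwise) Hardy bound is the reliable way to close the proof.
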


As far as we are concerned, the optimal value of the constant $C(p,\Om)$ is not known in general. According to, e.g., \cite{brevaz}, it is known that $C(p,\Om)>0$ and is actually inversely proportional to the diameter of $\Omega$. We define, as in \cite{VazquezZuazua}, a space strictly larger than $H_0^1(\Omega)$:   
\begin{equation}    \label{ec:defHcritic}
    \HH:=\overline{C_c^\infty(\Om)}^{\|\cdot\|_\HH},
\end{equation}
which is a Hilbert space with the norm 
\begin{equation}    \label{H-norm}
     \|y\|_\HH :=\Bigg( \int_\Om \left( |\nabla y|^2 - H_N\frac{|y|^2}{\vert x\vert^2} \right) dx \Bigg)^{\frac{1}{2}},
\end{equation}
associated to the bilinear form  
\begin{equation}
    \Tilde{\alpha}(y,\phi)= \int_\Om \left( \nabla y\cdot \nabla \phi - H_N \frac{y\phi}{|x|^2} \right) dx, \;\;\text{defined for all}\;\; y,\phi\in C^\infty_c(\Om).
\end{equation}
Thus, according to this definition, Theorem \ref{thmHardyCritic} asserts that the following continuous embedding holds:
\begin{equation}    \label{eq:HinW1p}
    \HH\subset W^{1,p}(\Om), \;\;\text{for all } 1\leq p<2. 
\end{equation}
By the Sobolev-Gagliardo-Nirenberg inequality (\cite[Corollary 9.14, pg. 285]{brezis}) and the Rellich-Kondrachov Theorem (\cite[Theorem 9.16, pg. 285]{brezis}), we also have 
\begin{equation}   \label{eq:SobolevGaNir}
    W^{1,p}(\Om)\subset L^{p^*}(\Om), \; \frac{1}{p^*}=\frac{1}{p}-\frac{1}{N},\; p<N,
\end{equation}
with continuous, dense and compact injection. Since for $p^*=2$ we have $p=\frac{2N}{N+2}<2$ for any $N\geq 3$, it also follows that
\begin{equation}   \label{HinL2}
    \HH\subset L^2(\Om)
\end{equation}
with continuous, dense and compact injection. Since $\HH\subset W^{1,p}(\Om)$ by \eqref{ec:HardyCritic}, then the trace of $y$ on $\Gamma$ makes sense. More information about the space $\HH$ can be found in \cite{VazquezZogra}. In the sequel, for the definition of $A$, we will follow \cite[Section 4]{VazquezZuazua}, defining in this case
\[
A:D(A)\subset L^{2}(\Omega )\rightarrow L^{2}(\Omega ),
\]%
with the domain   
\begin{eqnarray*}
D(A) &=&\{y\in \mathcal{H};Ay\in L^{2}(\Omega ),\text{ }y=0\text{ on }
\partial \Omega \} \\
&=&\{y\in \mathcal{H};\; \exists\; C_y>0 \text{ such that } \left\vert \left\langle Ay,\phi \right\rangle
_{\mathcal{D}^{\prime }(\Omega ),\mathcal{D}(\Omega )}\right\vert \leq
C_y\left\Vert \phi \right\Vert _{2},\text{ for }\phi \in C_{c}^{\infty }(\Omega) \},
\end{eqnarray*}
and its expression given by
\[
\left\langle Ay,\phi \right\rangle _{\mathcal{D}^{\prime }(\Omega ),\mathcal{
D}(\Omega )}=\int_{\Omega }\left\{ \left(- \nabla y \nabla \phi + H_{N}\frac{y\phi }{\left\vert x\right\vert ^{2}} \right) + a(x) y\phi + (v \cdot \nabla y)\phi \right\} dx,
\text{ for all }\phi \in C_{c}^{\infty }(\Omega).
\]
We can take $\phi \in C^\infty_c(\Om)$ above, since $C^\infty_c(\Om\setminus\{0\})$ is dense in $C_c^\infty(\Om)$ with respect to the norm in \eqref{H-norm}, due to \cite[Lemma 2.1]{VazquezZogra}. Hence, the equation from the system \eqref{ec:Esingular} can be written as
\begin{equation}  
y'(t) = A y(t) + B_1(x) w(t) + B_2(x) u(t), \;\;\;t > 0.
\end{equation}

Now we can state our main result in the critical case.
\begin{theorem}   \label{thm2}
    Let $\gamma>0$ and $\lmb = H_N$. Under the conditions of Theorem \ref{thm1}, assume in addition that 
    \begin{equation}    \label{ec:vCritic}
        \|v\|_\infty < C_0(p,\Om)    
    \end{equation}
    for some constant $C_0(p,\Om)$, which depends on $p$ and $\Omega$, that will be precised later. Then, there exists $\tilde{F}\in L(L^2(\Om),\R)$, which solves the $H^\infty$-control problem for system \eqref{ec:Esingular}, given in feedback form by
    $$\tilde{F}=-B^*_2P,$$
    where the operator $P$ satisfies conditions \eqref{ec:propTh} and the algebraic Ricatti equation \eqref{ec:ricatti}.
\end{theorem}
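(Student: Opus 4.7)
The strategy mirrors that of Theorem \ref{thm1}: verify the hypotheses $(i_1)$--$(i_3)$ of Theorem \ref{thmGM} in the critical setting and then invoke it. Condition $(i_3)$ and the operator-continuity part \eqref{ec:ipi1} of $(i_1)$ are immediate from \eqref{ec:ec01}--\eqref{ec:ec05}, exactly as before. The substantive task is to re-establish analogues of Lemmas \ref{lemmaAaccr-subcrt}--\ref{lemmaAexpstb-subcrt} in the critical regime, with $\HH$ playing the role of $H_0^1(\Om)$ and the improved Hardy inequality \eqref{ec:HardyCritic} replacing \eqref{ec:HardyClasic}.

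For $\omega$-m-accretivity of $-A$, the computation of Lemma \ref{lemmaAaccr-subcrt} carries over with $\|y\|_\HH^2$ in place of $C_N\|\nabla y\|_2^2$, yielding
\[
((\omega I - A)y,y)_2 \geq \|y\|_\HH^2 + (\omega-\omega_0)\|y\|_2^2, \qquad \omega_0 := a_0 + \tfrac{1}{2}\|\nabla\cdot v\|_\infty.
\]
Surjectivity of $\omega I - A$ follows from Lax--Milgram applied to $\widetilde\alpha_\omega(y,z):=\omega(y,z)_2 - \langle Ay,z\rangle$ on $\HH\times\HH$. The new subtlety is continuity of the convection contribution: choosing $p\in[\tfrac{2N}{N+1},2)$ and combining Hölder's inequality with \eqref{eq:HinW1p} and \eqref{eq:SobolevGaNir} (so that $p'\leq p^*$), one obtains $\bigl|\int_\Om(v\cdot\nabla y)z\,dx\bigr| \leq C_1(p,\Om)\|v\|_\infty\|y\|_\HH\|z\|_\HH$. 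Compactness of $(\omega I - A)^{-1}:L^2(\Om)\to L^2(\Om)$ then follows from the compact injection $\HH\hookrightarrow L^2(\Om)$ of \eqref{HinL2}.

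The main obstacle is the analyticity of the semigroup. The perturbative splitting $A=A_0+B$ with $By=v\cdot\nabla y$ employed in Lemma \ref{lemmaAanltc-subcrt} cannot be repeated, because for $y\in D(A_0)\subset\HH$ in the critical case, $\nabla y$ lies only in $L^p$ for $p<2$, so the relative $L^2$-bound $\|By\|_2\leq\epsilon\|A_0y\|_2+C\|y\|_2$ is unavailable. My plan is to drop the perturbation route and instead invoke the Lions--Kato form theory: since $\widetilde\alpha_\omega$ is continuous and coercive on $\HH$, the associated operator is sectorial on $L^2(\Om)$ and generates an analytic $C_0$-semigroup. The threshold $C_0(p,\Om)$ in \eqref{ec:vCritic} is expressible through the constant $C(p,\Om)$ in \eqref{ec:HardyCritic} and the Sobolev constant in \eqref{eq:SobolevGaNir}, and is used precisely to keep the non-symmetric part of the form subordinate to the coercive symmetric part.

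For condition $(i_2)$, I repeat the Datko-based argument of Lemma \ref{lemmaAexpstb-subcrt} with $K=-kI$, $k>\omega_0$. Multiplying $y'=(A-kC_1)y$ by $y$, integrating, and using $\Om_0\subset\subset\Om_C$ to absorb the $a_0$-term into the $k$-term gives
\[
\int_0^t\|y(s)\|_\HH^2\,ds \leq \tfrac{1}{2}\|y_0\|_2^2 + \tfrac{1}{2}\|\nabla\cdot v\|_\infty\int_0^t\|y(s)\|_2^2\,ds,
\]
and the Poincaré-type inequality $\|y\|_\HH^2\geq\mu_0\|y\|_2^2$, valid with some $\mu_0>0$ by the compact embedding \eqref{HinL2}, then yields $\int_0^\infty\|y\|_2^2\,dt<\infty$ whenever \eqref{ec:vCritic} is imposed with $C_0(p,\Om)$ chosen so as to enforce $\mu_0>\tfrac{1}{2}\|\nabla\cdot v\|_\infty$. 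Datko's theorem \cite{datko} then delivers the exponential stability in \eqref{ec:expdetect}, and the integrability \eqref{ec:ipi3} follows from the boundedness of $B_2^*$. With $(i_1)$--$(i_3)$ verified, Theorem \ref{thmGM} applies with $\mathcal{X}=D(A)$ and produces $P$ and $\widetilde F = -B_2^*P$ as claimed; the kernel reformulation \eqref{ec:ricattieq}--\eqref{ec:controlformula} then follows exactly as in Section 3.
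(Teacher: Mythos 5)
Your overall skeleton (verify $(i_1)$--$(i_3)$ and invoke Theorem \ref{thmGM}, with $\HH$ replacing $H_0^1(\Om)$ and \eqref{ec:HardyCritic} replacing \eqref{ec:HardyClasic}) is the paper's, but inside $(i_1)$ you take genuinely different routes. For quasi-m-accretivity and surjectivity the paper does not apply Lax--Milgram directly on $\HH$: it regularizes the potential, works with $A_\eps y=\Delta y+H_N y/(|x|^2+\eps)+a y+v\cdot\nabla y$ on $H^2(\Om)\cap H_0^1(\Om)$, and passes to the limit using the compact embedding \eqref{HinL2}. Your direct argument is viable, but note that its two delicate points (testing the equation with $y\in D(A)\subset\HH$ itself, and the integration by parts $\int_\Om (v\cdot\nabla y)\,y\,dx=-\tfrac12\int_\Om(\nabla\cdot v)\,y^2\,dx$ on $\HH$) must be justified by density of $C_c^\infty(\Om)$ in $\HH$, which is exactly what your H\"older/Sobolev bound with $p\in[\tfrac{2N}{N+1},2)$, $p'\le p^*$ provides; that bound is the same key estimate the paper uses. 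For analyticity the paper does not use form theory: it proves the resolvent estimate \eqref{analiticity} by hand, splitting $\sigma=\sigma_1+i\sigma_2$, solving the coupled real system \eqref{eq:sigmasystem} on $L^2(\Om)\times L^2(\Om)$, and using \eqref{ec:vCritic} precisely to absorb the convection contribution $\tfrac{2C_{p,\Om}\|v\|_\infty}{C(p,\Om)}\|y\|_\HH^2$ into the coercive term. Your appeal to the Lions--Kato generation theorem for bounded, quasi-coercive non-symmetric forms on $\HH\subset L^2(\Om)$ is a correct and shorter alternative (in fact it yields analyticity from boundedness plus the G\r{a}rding inequality alone, without \eqref{ec:vCritic}, so your remark that the smallness is needed to keep the skew part ``subordinate'' misplaces where the paper actually uses that hypothesis).

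The genuine gap is in $(i_2)$. After discarding the damping term $k\int_{\Om_C}|y|^2dx$ (having used it only to absorb the $a_0$-term), your inequality requires $\mu_0>\tfrac12\|\nabla\cdot v\|_\infty$, where $\mu_0$ is the embedding constant in $\|y\|_\HH^2\ge\mu_0\|y\|_2^2$, and you claim this can be ``enforced'' by the choice of $C_0(p,\Om)$ in \eqref{ec:vCritic}. That is a non sequitur: \eqref{ec:vCritic} constrains $\|v\|_\infty$ only, and no bound on $\|v\|_\infty$ controls $\|\nabla\cdot v\|_\infty$; moreover $\mu_0$ is a fixed constant of $\Om$, so what you need is an additional hypothesis on $\nabla\cdot v$ that Theorem \ref{thm2} does not make. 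The paper proceeds differently: detectability is handled ``in the same manner as in the subcritical case,'' i.e.\ as in Lemma \ref{lemmaAexpstb-subcrt}, where the Hardy term is kept only as a nonnegative contribution and the divergence term is absorbed into $2(k-\omega_0)\int_0^t\|y(s)\|_2^2\,ds$ by exploiting the freedom to take $k>\omega_0=a_0+\tfrac12\|\nabla\cdot v\|_\infty$ as large as one wishes (your more careful bookkeeping in fact exposes that this absorption uses the damping as if it acted on all of $\Om$ rather than only on $\Om_C$, but that is the paper's argument, not yours to repair here). As written, your verification of \eqref{ec:ipi3} does not close under the stated hypotheses: either retain the $k$-damping and argue as in Lemma \ref{lemmaAexpstb-subcrt}, or add an explicit assumption of the type $\|\nabla\cdot v\|_\infty<2\mu_0$, which is not part of the theorem.
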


The proof relies on checking the conditions $(i_1)-(i_3)$ from Theorem \ref{thmGM}. We will focus on the parts of the proof that are different than in the previous case (mostly the ones depending on $A$).

$(i_1)$ The condition follows from the following two lemmas. 
\begin{lemma}   \label{lemmaAaccr-crt}
    The operator $-A$ is $\omega$-m-accretive and $A$ generates a compact $C_0$-semigroup on $L^2(\Om)$, provided that $\omega > \omega_0$.
\end{lemma}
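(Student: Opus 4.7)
The plan is to replicate the three-step structure of Lemma \ref{lemmaAaccr-subcrt} ($\omega$-accretivity, surjectivity of $\omega I - A$, compactness of the resolvent), replacing $H_0^1(\Om)$ by the larger energy space $\HH$ of \eqref{ec:defHcritic} and the classical Hardy inequality by its improved counterpart from Theorem \ref{thmHardyCritic}. The essential novelty is that elements of $\HH$ only enjoy $\nabla y\in L^p(\Om)$ for some $p<2$ via the embedding \eqref{eq:HinW1p}, so the convection integrals cannot be handled directly by Cauchy--Schwarz in $L^2$; they must be paired through Sobolev embedding.

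For $\omega$-accretivity, I would compute, for $y\in D(A)$,
\begin{equation*}
    ((\omega I-A)y,y)_2 = \omega\|y\|_2^2 + \int_\Om\left(|\nabla y|^2 - H_N\frac{|y|^2}{|x|^2}\right)dx - \int_\Om a(x)|y|^2 dx - \int_\Om (v\cdot\nabla y)y\,dx,
\end{equation*}
recognizing the Hardy-difference term as $\|y\|_\HH^2\geq 0$ by \eqref{H-norm}. Because $\HH$ is the closure of $C_c^\infty(\Om)$ in a norm dominating $\|\cdot\|_{W^{1,p}}$, each $y\in\HH$ has vanishing trace on $\Gamma$, so integration by parts (justified by density from $C_c^\infty(\Om)$) gives $\int_\Om(v\cdot\nabla y)y\,dx=-\tfrac{1}{2}\int_\Om(\nabla\cdot v)y^2 dx$, yielding $((\omega I-A)y,y)_2\geq \|y\|_\HH^2+(\omega-\omega_0)\|y\|_2^2\geq 0$ for $\omega\geq\omega_0:=a_0+\|\nabla\cdot v\|_\infty/2$.

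For surjectivity I would apply Lax--Milgram on $\HH$ to the bilinear form $\alpha(y,z):=\omega(y,z)_2-\langle Ay,z\rangle_{\mathcal{D}',\mathcal{D}}$. Coercivity is identical to the computation above. Continuity reduces to controlling the convection cross-term: picking $p\in[2N/(N+1),\,2)$ ensures that the Sobolev conjugate satisfies $p'\leq p^*=Np/(N-p)$, so via the chain $\HH\hookrightarrow W^{1,p}(\Om)\hookrightarrow L^{p^*}(\Om)\hookrightarrow L^{p'}(\Om)$ from \eqref{eq:HinW1p}--\eqref{eq:SobolevGaNir}, one has $|\int_\Om(v\cdot\nabla y)z\,dx|\leq\|v\|_\infty\|\nabla y\|_{L^p}\|z\|_{L^{p'}}\leq C\|v\|_\infty\|y\|_\HH\|z\|_\HH$. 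The weak solution $y\in\HH$ obtained this way satisfies $Ay=\omega y-f\in L^2(\Om)$ and has vanishing trace, so $y\in D(A)$. Finally, testing $z=y$ in the weak formulation and using $\HH\hookrightarrow L^2(\Om)$ gives $\|(\omega I-A)^{-1}f\|_\HH\leq C\|f\|_2$; combined with the compactness of the injection \eqref{HinL2} this makes $(\omega I-A)^{-1}:L^2(\Om)\to L^2(\Om)$ compact, and hence the generated semigroup is compact. The chief technical delicacy, present in every step, is arranging the convection integrals so that one factor lies in $L^{p'}$ with $p'\leq p^*$ while the other lies in $L^p$; this is the only place where the critical setting genuinely departs from the subcritical argument, and it is what forces the reliance on Theorem \ref{thmHardyCritic} rather than on the classical Hardy inequality.
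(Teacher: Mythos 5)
Your argument is correct in outline, but it takes a genuinely different route from the paper. You run the subcritical three-step scheme directly on $\HH$: Lax--Milgram for the form $\alpha(y,z)=\omega(y,z)_2+\tal(y,z)-\int_\Om a\,yz\,dx-\int_\Om (v\cdot\nabla y)z\,dx$, with the convection term controlled through $\HH\hookrightarrow W^{1,p}(\Om)\hookrightarrow L^{p^*}(\Om)\hookrightarrow L^{p'}(\Om)$ for $p\in[2N/(N+1),2)$, and compactness of the resolvent from the bound $\|y\|_{\HH}\le C\|f\|_2$ combined with \eqref{HinL2}. The paper instead regularizes the potential, replacing $|x|^{-2}$ by $(|x|^{2}+\eps)^{-1}$, solves the approximate problems in $H^2(\Om)\cap H_0^1(\Om)$ via the subcritical estimate \eqref{ec:Hardy-consq} and Lax--Milgram on $H_0^1(\Om)$, derives the uniform bounds \eqref{202}--\eqref{203} in $\HH$, and passes to the limit $\eps\to 0$ using the compact embedding $\HH\subset L^2(\Om)$ and test functions in $C_c^\infty(\Om\setminus\{0\})$, obtaining accretivity, surjectivity and compactness in the limit. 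What the regularization buys is that every integration by parts and every splitting of the energy into $\int_\Om|\nabla y|^2dx$ and $H_N\int_\Om y^2/|x|^2dx$ is performed on classical $H^2\cap H_0^1$ functions, where it is unambiguous; in your direct approach these manipulations take place on $\HH$, where the two integrals may be separately infinite, so two identification steps must be supplied by density and are the real content you gloss over: (i) for $y\in\HH$ and $\phi\in C_c^\infty(\Om)$ the defining expression of $\left\langle Ay,\phi\right\rangle_{\mathcal{D}'(\Om),\mathcal{D}(\Om)}$ must be shown to coincide with $-\tal(y,\phi)+\int_\Om a y\phi\,dx+\int_\Om(v\cdot\nabla y)\phi\,dx$, which is what lets you pass from the abstract Lax--Milgram identity to the distributional equation $\omega y-Ay=f$ and conclude $y\in D(A)$; and (ii) the identity $(Ay,y)_2=-\|y\|_{\HH}^2+\int_\Om a y^2dx+\int_\Om(v\cdot\nabla y)y\,dx$ must be proved for every $y\in D(A)$, not only for smooth $y$, since that is what the accretivity computation uses. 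Both follow by approximating in the $\HH$-norm, but note that the singular pairing $\int_\Om y\phi/|x|^2dx$ requires $p>N/(N-1)$ (your endpoint $p=2N/(N+1)$ is borderline when $N=3$), so $p$ should be taken strictly inside the admissible range. With these points filled in, your proof is complete, somewhat shorter than the paper's, and uses the same $L^p$--$L^{p'}$ pairing device that the paper reserves for the analyticity argument of Lemma \ref{lemmaAanltc-crt}; correctly, you also need no smallness condition on $\|v\|_\infty$ at this stage, in agreement with the paper.
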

\begin{proof}
This means to show that $-A$ is $\omega$-accretive, that is $((\omega I - A)y,y)_2\geq 0$ for some $\omega>0$  and all $y\in D(A)$ and that $\omega I - A$ is surjective. 
First, we prove the result for an approximation of $A$. We define, for any $%
\eps >0$ the operator 
$$ A_{\eps }y=\Delta y+H_{N}\frac{y}{\left\vert x\right\vert ^{2} + \eps}+a(x)y + v \cdot \nabla y,$$
by
$$\langle A_\eps y,\phi \rangle_{V',V} = \int_\Om \left( -\nabla y\cdot\nabla \phi + \lambda \frac{y \phi}{|x|^2+\eps} + a(x) y  \phi + (v \cdot \nabla y) \phi \right) dx, \;\;\text{for all} \;\; \phi\in H_0^1(\Om),$$
with the domain $D(A_\eps)=H^2(\Om)\cap H_0^1(\Om)$, due to the standard elliptic regularity. Let $\lmb_{\eps,N}$ be such that 
$$H_N > \lmb_{\eps,N}\geq H_N \max_{x\in\Om}\left(\frac{|x|^2}{|x|^2+\eps}\right)=H_N \frac{R^2}{R^2+\eps},$$
where $R=\max_{x\in\Om}|x|$. Taking into account the estimate \eqref{ec:Hardy-consq} and that $y=0$ on $\partial \Omega $, we get 
\begin{align}
    ((\omega I-A_\eps)y,y)_2 & = \omega\int_{\Om} |y|^2dx + \int_\Om \left( |\nabla y|^2 - H_N\frac{|y|^2}{|x|^2+\eps} \right) dx - \int_{\Om} a |y|^2  dx - \frac{1}{2}\int_\Om |y|^2 \nabla \cdot v dx
 \notag\\
    & = \int_\Om \left( |\nabla y|^2 - \lmb_{\eps,N}\frac{|y|^2}{|x|^2} \right) dx +\int_\Om \left( \frac{\lmb_{\eps,N}}{|x|^2} - \frac{H_N}{|x|^2+\eps} \right) |y|^2 dx + (\omega - \omega_0) \|y\|_2^2    \notag\\
    & \geq C_{N,\eps} \|y\|_{H_0^1(\Om)}^2 + (\omega - \omega_0) \|y\|_2^2,  \;\text{ for all }\; y\in D(A_\eps) \;\text{and all}\; \eps>0,   \label{ec:accretive-crt}   
\end{align}
where $C_{N,\eps}:=1-\frac{\lmb_{\eps,N}}{H_N}$. This shows that $-A_{\eps }$ is $\omega $-accretive for $\omega >\omega _{0}$. Moreover, it is easily seen that $\omega I-A_{\eps }$ is linear continuous from $H_{0}^{1}(\Omega )$ to $H^{-1}(\Omega).$ Then, using \eqref{ec:accretive-crt} and the Lax-Milgram theorem it follows that $\omega I-A_{\eps }$ is surjective, that is $R(\omega I-A_{\eps })=V^{\prime }.$ In particular, for each $f\in L^{2}(\Omega )$, the equation 
\begin{equation}
\omega y_{\eps }-A_{\eps }y_{\eps }=f  \label{200}
\end{equation}
has a unique solution $y_{\eps }\in H^2(\Om)\cap H_{0}^{1}(\Omega ).$ By (\ref{200}) and since
$$\left( \omega - \omega_0\right) \left\Vert y_{\eps }\right\Vert _{2}^{2} + \|y_\eps\|_\HH^2 \leq ((\omega I_\eps-A_\eps) y_\eps,y_\eps)_2,$$
we deduce that $y_\eps$ satisfies the estimate
\begin{equation}
\left( \omega - \omega_0\right) \left\Vert y_{\eps }\right\Vert _{2}^{2}+\left\Vert y_{\eps }\right\Vert _{\mathcal{H}}^{2} \leq  \left\Vert f\right\Vert_{2}\left\Vert y_{\eps}\right\Vert _{2},  \label{201}
\end{equation}
implying 
\begin{equation}
\left( \omega -\omega _{0}\right) \left\Vert y_{\eps }\right\Vert
_{2}\leq \left\Vert f\right\Vert _{2}  \label{202}
\end{equation}
and also, using the last inequality,
\begin{equation}
\left\Vert y_{\eps }\right\Vert _{\mathcal{H}}^{2}\leq \frac{1}{%
\omega -\omega _{0}}\left\Vert f\right\Vert _{2}^{2}.  \label{203}
\end{equation}
Thus, for $\omega >\omega _{0},$ we deduce that $\{y_{\eps
}\}_{\eps }$ is bounded in $\mathcal{H}.$ Since $\mathcal{H}$ is
compactly embedded in $L^{2}(\Omega )$ it follows that on a subsequence 
$$ y_{\eps }\rightarrow y\text{ weakly in }\mathcal{H}\text{ and
strongly in }L^{2}(\Omega )\text{, as }\eps \rightarrow 0. $$
By (\ref{200}) it follows that 
$$ A_{\eps }y_{\eps }\rightarrow f-\omega y\text{ strongly in }%
L^{2}(\Omega )\text{, as }\eps \rightarrow 0. $$
Taking into account that 
\begin{align}
(A_{\eps }y_{\eps },\phi )_{2} & = \int_{\Omega }y_{\eps
}\left( \Delta \phi +H_{N}\frac{\phi }{\eps + \left\vert x\right\vert ^{2}}+a\phi
-\nabla \cdot (v \phi )\right) dx   \\
&\int_{\Omega }y_{\eps}\left( \Delta \phi +H_{N}\frac{\phi }{\left\vert x\right\vert ^{2}}+a\phi
-\nabla \cdot (v \phi )\right) dx -\eps \int_{\Omega }H_{N}\frac{\phi y_{\eps }}{\left\vert
x\right\vert ^{2}(\eps +\left\vert x\right\vert ^{2})}dx \\
&\rightarrow (Ay,\phi )_2,\text{ for all }\phi \in C_{c}^{\infty }(\Omega
\backslash \{0\}), \; \text{as }\;\eps\to 0,
\end{align}
we get that 
$$ (f-\omega y,\phi )_{2}=(Ay,\phi )_{2},\text{ for all }\phi \in C_{c}^{\infty
}(\Omega \backslash \{0\}). $$
Hence, by density, 
\[
(f-\omega y,\phi )_{2}=(Ay,\phi )_{2},\text{ for all }\phi \in H_0^1(\Om),\text{ }
\]%
which shows that $\omega y-Ay=f.$
Passing to the limit in \eqref{ec:accretive-crt} written for $y=y_{\eps },$ 
$$\left(\left( \omega I-A \right) y_{\eps } , y_{\eps }\right)
_{2}\geq (\omega -\omega _{0})\left\Vert y_{\eps }\right\Vert _{2}^{2}, $$
we get 
$$(\left( \omega I-A)y,y\right) _{2}\geq (\omega
-\omega _{0})\left\Vert y\right\Vert _{2}^{2},$$
meaning that $\omega I-A$ is accretive. This shows that $-A$ is quasi $m$-accretive on $L^{2}(\Omega )$
and so $A$ generates a $C_{0}$-semigroup. 
Relations (\ref{202}) and (\ref{203}) are preserved by passing to the limit by $\eps\to 0$ and since $\mathcal{H}$ is compactly embedded in $L^{2}(\Omega )$ it follows that the resolvent is compact. Thus the semigroup $e^{At}$ generated by $A$ is compact, for all $t>0.$

\end{proof}

\begin{lemma}   \label{lemmaAanltc-crt}
    The operator $A$ generates an analytic semigroup, provided that \eqref{ec:vCritic} holds.
\end{lemma}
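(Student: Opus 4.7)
The plan is to abandon the Pazy perturbation argument used in the subcritical case and work instead through the sesquilinear form method, since in the critical setting Theorem~\ref{thmHardyCritic} only delivers $\nabla y \in L^p(\Om)$ for $p<2$, so $v\cdot\nabla y$ need not lie in $L^2(\Om)$ for $y\in\HH$ and the $A_0$-bounded perturbation framework breaks down. I would introduce on $\HH \times \HH$ the sesquilinear form
\[
\mathfrak{a}(y,\phi) := \int_\Om \Big( \nabla y\cdot\nabla \phi - H_N\frac{y\phi}{|x|^2} - a(x)y\phi - (v\cdot\nabla y)\phi \Big) dx,
\]
which coincides with $-\langle Ay,\phi\rangle$ on test functions and extends to $\HH$ by density of $C_c^\infty(\Om)$ in $\HH$.

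The central analytic point is continuity of $\mathfrak{a}$ on $\HH \times \HH$. The quadratic symmetric part contributes $\|y\|_\HH\|\phi\|_\HH$ by the very definition of the $\HH$-norm, and the zero-order term is controlled via $\HH \hookrightarrow L^2(\Om)$. The convection term is the delicate one: I would choose $p\in (2N/(N+1),2)$, so that $p^*\geq p'$ and hence the chain $\HH\subset W^{1,p}(\Om)\subset L^{p^*}(\Om)\subset L^{p'}(\Om)$ is continuous (using Theorem~\ref{thmHardyCritic} and \eqref{eq:SobolevGaNir}). Hölder's inequality then yields
\[
\Big|\int_\Om (v\cdot\nabla y)\phi dx\Big| \leq \|v\|_\infty \|\nabla y\|_p \|\phi\|_{p'} \leq \|v\|_\infty K(p,\Om) \|y\|_\HH \|\phi\|_\HH,
\]
with an explicit constant $K(p,\Om)>0$ depending only on $C(p,\Om)$ and on the Sobolev embedding constants.

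Shifted $L^2$-ellipticity follows as in Lemma~\ref{lemmaAaccr-crt}: integrating by parts the convection term (with $y=0$ on $\Gamma$), $\text{Re}\,\mathfrak{a}(y,y) \geq \|y\|_\HH^2 - \omega_0\|y\|_2^2$, so for $\omega>\omega_0$ the shifted form $\mathfrak{a}+\omega(\cdot,\cdot)_2$ is $\HH$-coercive. Passing to the complexification, the imaginary part of $\mathfrak{a}$ is supported entirely on the antisymmetric convection term, so
\[
|\text{Im}\,\mathfrak{a}(y,y)| \leq \|v\|_\infty K(p,\Om) \|y\|_\HH^2 \leq M \cdot \text{Re}(\mathfrak{a}(y,y)+\omega\|y\|_2^2),
\]
with $M$ proportional to $\|v\|_\infty K(p,\Om)$. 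The hypothesis $\|v\|_\infty < C_0(p,\Om)$ enters here to quantitatively bound $M$ and pin down a sector of opening strictly less than $\pi/2$ for the numerical range of $\mathfrak{a}$. The Kato--Lions form representation theorem then produces an analytic $C_0$-semigroup on $L^2(\Om)$ generated by the associated operator; identification of that operator with the $A$ defined in the paper is immediate from the density of $C_c^\infty(\Om)$ in $\HH$.

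The main obstacle will be calibrating the constants in the convection bound so that $C_0(p,\Om)$ is made explicit: since the Hardy constant $C(p,\Om)$ of Theorem~\ref{thmHardyCritic} is not sharp and the Sobolev embedding constants enter in the composite $K(p,\Om)$, only a sufficient, non-optimal threshold on $\|v\|_\infty$ can be obtained, parametrized by the choice of $p\in(2N/(N+1),2)$. Closedness of the form, a prerequisite for the representation theorem, is automatic once $\HH$-ellipticity is in hand, since the form graph norm is then equivalent to $\|\cdot\|_\HH$ and $\HH$ is a Hilbert space.
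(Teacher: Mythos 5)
Your argument is correct, but it follows a genuinely different route from the paper. The paper does not use the form method: it verifies Pazy's resolvent criterion directly, writing $(\sigma I-A)y=f$ for $\sigma=\sigma_1+i\sigma_2$, proving solvability of the resulting $2\times 2$ real system by an accretivity/surjectivity argument on $L^2(\Om)\times L^2(\Om)$, and then extracting the estimate $\|y\|_2\leq 2\|f\|_2/\abs{\sigma-\sigma_0}$ by testing the real and imaginary equations separately; the convection contribution in the imaginary-part estimate is controlled by exactly the same chain $\HH\subset W^{1,p}(\Om)\subset L^{p^*}(\Om)\subset L^{p'}(\Om)$, $p\in[2N/(N+1),2)$, that you use, and the smallness hypothesis \eqref{ec:vCritic} is what allows the term $2C_{p,\Om}\|v\|_\infty\|y\|^2_{W^{1,p}(\Om)}$ to be absorbed into $\|y\|_\HH^2$ via \eqref{ec:HardyCritic}. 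Your sesquilinear-form (Kato--Lions) argument packages the existence of the resolvent and the sectorial estimate into the representation theorem for closed sectorial forms, with the only analytic input being boundedness of the convective form on $\HH\times\HH$ and the shifted coercivity already established in Lemma \ref{lemmaAaccr-crt}; the identification of the form operator with $A$ is indeed routine, using density of $C_c^\infty(\Om)$ in $\HH$, continuity of the form, and the fact that elements of $\HH$ have zero trace through $\HH\subset W^{1,p}(\Om)$. One point worth flagging: your use of \eqref{ec:vCritic} is actually gratuitous. Any finite bound $\abs{\mathrm{Im}\,\mathfrak{a}(y,y)}\leq M\big(\mathrm{Re}\,\mathfrak{a}(y,y)+\omega\|y\|_2^2\big)$ places the numerical range in a sector of half-angle $\arctan M<\pi/2$, so the form method yields analyticity for \emph{any} $v$ satisfying \eqref{ec:vCond}, not just small $\|v\|_\infty$; this is a genuine strengthening compared with the paper's proof, where smallness is essential to close the estimate (and it is the only place \eqref{ec:vCritic} is used). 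What the paper's route buys in exchange is a self-contained, elementary computation relying only on Pazy's criterion, with an explicit shift $\sigma_0=\omega_0+\|\nabla\cdot v\|_\infty$, whereas your route leans on the sectorial-form representation theorem but is shorter and, as noted, removes the restriction on $\|v\|_\infty$.
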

\begin{proof}
We have to prove, according to \cite[pg. 61, Theorem 5.2]{pazy} that there exists $\sigma_0\in\R_+$ such that 
    \begin{equation}   \label{analiticity}
        \|(\sigma I -A)^{-1}f\|_2 \leq \frac{M_f}{\abs{\sigma-\sigma_0}},    
    \end{equation}
for 
\begin{equation}
    \sigma\in\mathbb{C},\;\; \sigma=\sigma_1+i\sigma_2 \;\;\text{with}\;\; \sigma_1>\sigma_0.
\end{equation}
Let $f\in L^2(\Om)$. We consider the equation 
$$(\sigma I-A)y = f,$$
which can be written on components as
    \begin{equation}
        (\sigma_1+i\sigma_2)(y_1+iy_2)-A(y_1+iy_2) = f_1+if_2.   \notag
    \end{equation}
This implies that
\begin{align}   \label{eq:sigmasystem}
    \sigma_1 y_1 - \sigma_2 y_2 - Ay_1 &= f_1,  \notag\\
    \sigma_1 y_2 + \sigma_2 y_1 - Ay_2 &= f_2.  
\end{align}
We prove that system \eqref{eq:sigmasystem} has a solution $y\in D(A)$. Let $\LL:=L^2(\Om)\times L^2(\Om)$ and define on this space the operators
\begin{equation}
    \A \begin{pmatrix}
        \;y_1\; \\ \;y_2\;
    \end{pmatrix}
    = \begin{pmatrix}
        \;-Ay_1\; \\ \;-Ay_2\;
    \end{pmatrix}, \;\;
    \A_0 \begin{pmatrix}
        \;y_1\; \\ \;y_2\; 
    \end{pmatrix}
    = \begin{pmatrix}
        \;\sigma_1 y_1 - \sigma_2 y_2\; \\ \;\sigma_1 y_2 + \sigma_2 y_1\;  
    \end{pmatrix}, \;\; \forall (y_1,y_2)\in \LL.    \notag
\end{equation}
Then, for some $\gamma>0$, we have
\begin{align}
    \left(  \;\A \begin{pmatrix}
        \;y_1\; \\ \;y_2\;
    \end{pmatrix},
    \begin{pmatrix}
        \;y_1\; \\ \;y_2\;
    \end{pmatrix}\; \right) & \;= \; \left(-Ay_1, y_1\right)_2 \;+\; \left(-Ay_2,y_2\right)_2 \; \notag\\
    & \geq \|y\|_{\HH\times\HH}^2 - \omega_0 \|y\|_\LL^2 \notag\\
    & \geq -\gamma \|y\|_\LL^2     \notag
\end{align}
and 
\begin{equation}
    \left(  \;\A_0 \begin{pmatrix}
        \;y_1\; \\ \;y_2\;
    \end{pmatrix}\;,\;
    \begin{pmatrix}
        \;y_1\; \\ \;y_2\;
    \end{pmatrix}\;
    \right) \geq \sigma_1 \|y\|_\LL^2.   \notag
\end{equation}
We conclude that the operator $\A+\gamma I$ is quasi $m-$accretive for $\sigma_1>\gamma$ and $\A_0$ is $m-$accretive and continuous. Then the operator $\A+\A_0$ is quasi-$m-$accretive (cf. \cite[pg. 44, Corollary 2.6]{Barbu1}). Being also coercive for $\sigma_1>\gamma$, it is surjective (cf. \cite[pg. 36, Corollary 2.2]{Barbu1}), proving that $R(\A+\A_0)=\LL$. Therefore, for $\sigma_1>\gamma$ and $\sigma_2\in\R$, system \eqref{eq:sigmasystem} has a solution. \\
We multiply scalarly in $L^2(\Om)$ the first equation in \eqref{eq:sigmasystem} by $y_1$ and the second one with $y_2$. Summing the two equations, by \eqref{ec:accretive-crt}, we get
\begin{align}
    \sigma_1 & \|y_1\|_2^2 + \|y_1\|_\HH^2 - \omega_0 \|y_1\|_2^2   \notag\\
    + \sigma_1 & \|y_2\|_2^2 + \|y_2\|_\HH^2 - \omega_0 \|y_2\|_2^2   \notag\\
    \leq & \|f_1\|_2\|y_1\|_2 + \|f_2\|\|y_2\|_2.   \notag
\end{align}
Hence, we obtain that 
\begin{equation}   \label{eq:sigma1estimate}
    (\sigma_1 - \omega_0) \|y\|_2^2 + \|y\|_\HH^2 \leq \|f\|_2\|y\|_2.
\end{equation}

Now, assume first that $\sigma_2>0$ and multiply scalarly in $L^2(\Om)$ the first equation in \eqref{eq:sigmasystem} by $-y_2$ and the second by $y_1$. Summing up, by the definitions of $A_0$ and $B$ from \eqref{eq:Adecomposition}, we have
\begin{align}  \label{eq:sigma2estimates}
    \|f\|_2\|y\|_2 & \geq \sigma_2\|y\|_2^2 + \left(A_0y_1,y_2\right)_2 + \left(By_1,y_2\right)_2 - \left(A_0y_2,y_1\right)_2 - \left(By_2,y_1\right)_2 \notag\\
    & = \sigma_2\|y\|_2^2 + \left(By_1,y_2\right)_2 - \left(By_2,y_1\right)_2.
\end{align}
The terms with $A_0$ reduce, since $A_0$ is self-adjoint. Also, we have that
\begin{align}   \label{eq:Bestimates}
    (By_1, y_2)_2 - (By_2, y_1)_2 & = \int_\Om y_2 v \cdot \nabla y_1 dx - \int_\Om y_1 v \cdot \nabla y_2 dx   \notag\\
    & = -2 \int_\Om y_1 v \cdot \nabla y_2 dx - \int_\Om y_1 y_2 \nabla\cdot v dx  \notag\\
    & \leq 2\|v\|_\infty\|y_1\|_{p'}\|\nabla y_2\|_p + \|\nabla\cdot v\|_\infty\|y_1\|_2\|y_2\|_2, 
\end{align}
where we note by \eqref{ec:HardyCritic} that $\nabla y_2\in \big(L^p(\Omega)\big)^N$ and $\frac{1}{p'} = 1 - \frac{1}{p}$. We choose $p$ such that $\frac{2N}{N+1}\leq p<2$, hence $p^*>p'$. Indeed, we have
\begin{equation}
    \frac{1}{p^*} - \frac{1}{p'} = \frac{1}{p}-\frac{1}{N}-1+\frac{1}{p} = \frac{2}{p}-\frac{N+1}{N}<0.  \notag
\end{equation}
Thus, by \eqref{eq:SobolevGaNir}, it holds that
\begin{equation}
    W^{1,p}(\Om) \subset L^{p^*}(\Om) \subset L^{p'}(\Om),
\end{equation}
and so there exists $C_{p,\Om}>0$ such that $\|y_1\|_{p'} \leq C_{p,\Om} \|y_1\|_{W^{1,p}(\Om)}$. Finally, by using \eqref{eq:Bestimates} and \eqref{eq:HinW1p} in \eqref{eq:sigma2estimates}, we get
\begin{equation}  \label{eq:sigma2estimates2}
    (\sigma_2 - \|\nabla\cdot v\|_\infty) \|y\|_2^2 \leq \|f\|_2\|y\|_2 + 2C_{p,\Om}\|v\|_\infty \|y_1\|_{W^{1,p}(\Om)}\|y_2\|_{W^{1,p}(\Om)}.
\end{equation}
Now we add \eqref{eq:sigma1estimate} and \eqref{eq:sigma2estimates2} and use \eqref{ec:HardyCritic}:
\begin{align}
    (\sigma_1-\omega_0)\|y\|_2^2 + \|y\|_\HH^2 + (\sigma_2 - \|\nabla\cdot v\|_\infty)\|y\|_2^2  
    & \leq 2\|f\|_2\|y\|_2 + 2C_{p,\Om} \|v\|_\infty \|y\|_{W^{1,p}(\Om)}^2   \notag\\
    & \leq 2\|f\|_2\|y\|_2 + \frac{2C_{p,\Om}\|v\|_\infty}{C(p,\Om)}\|y\|_\HH^2.
\end{align}
Further,
\begin{equation}   \label{eq:sigma12estimate}
    (\sigma_1 + \sigma_2 - \omega_0 - \|\nabla \cdot v \|_\infty)\|y\|_2^2 + \left(1-\frac{2C_{p,\Om}\|v\|_\infty}{C(p,\Om)}\right) \|y\|_\HH^2 \leq 2 \|f\|_2\|y\|_2.   
\end{equation}
Taking into account that $\HH\subset L^2(\Om)$ and \eqref{ec:vCritic}, where we can choose 
\begin{equation}
    C_0(p,\Om):=\frac{C(p,\Om)}{2C_{p,\Om}},
\end{equation}
we finally get
\begin{equation}
    (\sigma_1+\sigma_2-\omega_0 - \|\nabla\cdot v\|_\infty) \|y\|_2\leq  2\|f\|_2.   \notag
\end{equation}
Hence,
\begin{equation}  \label{eq:conclusion}
    \|y\|_2 \leq \frac{2\|f\|_2}{\abs{\sigma-\sigma_0}}, \;\;\text{where} \;\sigma_0:=\omega_0 + \|\nabla\cdot v\|_\infty.
\end{equation}

Now, let $\sigma_2<0$. We multiply scalarly in $L^2(\Om)$ the first equation by $y_2$ and
then add with the second equation multiplied by $-y_1$: Proceeding with the same calculus we obtain \eqref{eq:sigma12estimate} with $-\sigma_2 = \abs{\sigma_2}$ instead of $\sigma_2$, which implies again \eqref{eq:conclusion}.
\end{proof}

With the improvements above, under the hypotheses \eqref{ec:vCritic} on $v$, in the same manner as in the subcritical case, Theorem \ref{thm2} follows.

\end{document}